\newcommand{\concat}[0]{\textrm{\^{}}}
\newcommand{\comp}[0]{\mathtt{c}}
\DeclareMathSymbol{\mlq}{\mathord}{operators}{``}
\DeclareMathSymbol{\mrq}{\mathord}{operators}{`'}
\DeclareMathOperator{\dom}{dom}
\DeclareMathOperator{\ran}{ran}
\DeclareMathOperator{\Succ}{Succ}
\DeclareMathOperator{\Indep}{Indep}
\newtheorem{thm}{Theorem}[section]
\newtheorem{theorem}[thm]{Theorem}
\newtheorem{corollary}[thm]{Corollary}
\newtheorem{cor}[thm]{Corollary}
\newtheorem{proposition}[thm]{Proposition}
\newtheorem{lemma}[thm]{Lemma}
\newtheorem*{claim}{Claim}
\theoremstyle{remark}
\newtheorem{remark}[thm]{Remark}
\newtheorem{np*}{Non-Proof}
\theoremstyle{definition}
\newtheorem{defn}[thm]{Definition}
\newtheorem{definition}[thm]{Definition}
\newtheorem{notation}[thm]{Notation}
\newtheorem{exam}[thm]{Example}
\numberwithin{subcase}{case}
\newcommand{\mc}{\mathcal }
\newcommand*{\scrA}{\mathcal{A}}
\newcommand*{\scrB}{\mathcal{B}}
\newcommand*{\scrC}{\mathcal{C}}
\newcommand*{\scrN}{\mathcal{N}}
\newcommand{\bc}{\textbf{c}}
\newcommand{\bd}{\textbf{d}}
\newcommand{\be}{\textbf{e}}
\newcommand{\abar}{\bar{a}}
\newcommand{\bbar}{\bar{b}}
\newcommand{\cbar}{\bar{c}}
\newcommand{\rt}{\rightarrow}
\newcommand{\iso}{\cong}
\newcommand{\niso}{\ncong}
\newcommand{\ce}{c.e.\ }
\newcommand{\join}{\oplus}
\newcommand{\Eres}{\upharpoonright\!\!\upharpoonright}
\newcommand{\res}{\upharpoonright}
\newcommand{\la}{\langle}
\newcommand{\ra}{\rangle}
\newcommand{\conc}{\widehat{\ }}
\begin{document}

\setlist[enumerate]{noitemsep, topsep=0pt}

\title{Degrees of Categoricity on a Cone}

\author{Barbara F. Csima}
\address{Department of Pure Mathematics\\
University of Waterloo\\
 Canada}
 \email{csima@uwaterloo.ca}
 \urladdr{\href{http://www.math.uwaterloo.ca/~csima}{www.math.uwaterloo.ca/$\sim$csima}}

\author{Matthew Harrison-Trainor}
\address{Group in Logic and the Methodology of Science\\
University of California, Berkeley\\
 USA}
\email{matthew.h-t@berkeley.edu}
\urladdr{\href{http://www.math.berkeley.edu/~mattht/index.html}{www.math.berkeley.edu/$\sim$mattht}}

\thanks{B. Csima was partially supported by Canadian NSERC Discovery Grant 312501.\\
M. Harrison-Trainor was partially supported by the Berkeley Fellowship and NSERC grant PGSD3-454386-2014.}

\begin{abstract}
We investigate the complexity of isomorphisms of computable structures on cones in the Turing degrees. We show that, on a cone, every structure has a strong degree of categoricity, and that degree of categoricity is $\bf{0^{(\alpha)}}$ for some $\alpha$. To prove this, we extend Montalb\'an's $\eta$-system framework to deal with limit ordinals in a more general way. We also show that, for any fixed computable structure, there is an ordinal $\alpha$ and a cone in the Turing degrees such that the exact complexity of computing an isomorphism between the given structure and another copy $\scrB$ in the cone is a \ce degree in $\Delta^0_\alpha(\scrB)$. In each of our theorems the cone in question is clearly described in the beginning of the proof, so it is easy to see how the theorems can be viewed as general theorems with certain effectiveness conditions.
\end{abstract}

\maketitle


\section{Introduction}

In this paper, we will consider the complexity of computing isomorphisms between computable copies of a structure after relativizing to a cone. By relativizing to a cone, we are able to consider \textit{natural} structures, that is, those structures which one might expect to encounter in normal mathematical practice. The main result of this paper is a complete classification of the natural degrees of categoricity: the degrees of categoricity of natural computable structures. Unless otherwise stated, all notation and conventions will be as in the book by Ash and Knight \cite{AshKnight00}. We consider countable structures over at most countable languages.

Recall that a computable structure is said to be computably categorical if any two computable copies of the structure are computably isomorphic. As an example, consider the rationals as a linear order; the standard back-and-forth argument shows that the rationals are computably categorical. It is easy to see, however, that not all computable structures are computably categorical. The natural numbers as a linear order is one example.

There has been much work in computable structure theory dedicated to characterizing computable categoricity for various classes of structures (e.g., a linear order is computably categorical if and only if it has at most finitely many successivities \cite{GoncharovDzgoev}, \cite{Remmel}). For those structures that are not computably categorical, what can we say about the isomorphisms between computable copies, or more generally, about the complexities of the isomorphisms relative to that of the structure?

We can extend the definition of computable categoricity as follows.

\begin{definition} A computable structure $\scrA$ is
\emph{$\bf{d}$-computably categorical} if for all
computable $\scrB \iso \scrA$ there exists a $\bf{d}$-computable isomorphism between $\scrA$ and $\scrB$.
\end{definition}

\noindent It is easy to see, for example, that the natural numbers as a linear order, $\scrN$, is $\bf{0}'$-computably categorical. Indeed, it is also easy to construct a computable copy $\scrA$ of $\scrN$ such that every isomorphism between $\scrA$ and $\scrN$ computes $\bf{0}'$. Thus $\bf{0}'$ is the least degree $\bf{d}$ such that $\scrN$ is $\bf{d}$-computably categorical. This motivates the following definitions.

\begin{definition}We say a computable structure $\scrA$ has \emph{degree of
categoricity} $\bf{d}$ if

\begin{enumerate}
\item $\scrA$ is $\bf{d}$-computably categorical.
\item If $\scrA$ is $\bf{c}$-computably categorical, then
    $\bf{c} \geq \bf{d}$.
\end{enumerate}

\end{definition}

\begin{definition}We say that a Turing degree $\bf{d}$ is a \emph{degree of
categoricity} if there exists a computable structure $\scrA$ with degree of
categoricity $\bf{d}$.
\end{definition}

The notion of a degree of categoricity was first introduced by Fokina, Kalimullin and R. Miller \cite{FKM}. They showed that if ${\bf d}$ is d.c.e.\  (difference of c.e.) in and above ${\bf{0}}^{(n)}$, then ${\bf d}$ is a degree of categoricity. They also showed that ${\bf 0}^{(\omega)}$ is a degree of categoricity. For the degrees c.e.\ in and above ${\bf0}^{(n)}$, they exhibited rigid structures capturing the degrees of categoricity. In fact, all their examples had the following, stronger property.

\begin{definition}A degree of categoricity ${\bf d}$ is a
\emph{strong} degree of categoricity if there is a structure
$\scrA$ with computable copies $\scrA_0$ and $\scrA_1$ such
that ${\bf d}$ is the degree of categoricity for $\scrA$, and
every isomorphism $f: \scrA_0 \rightarrow \scrA_1$ satisfies
deg$(f) \geq {\bf d}$.
\end{definition}


In \cite{CFS}, Csima, Franklin and Shore showed that for every computable
ordinal $\alpha$, ${\bf 0}^{(\alpha)}$ is a strong degree of
categoricity. They also showed that if $\alpha$ is a computable
successor ordinal and ${\bf d}$ is d.c.e. in and above ${\bf
0}^{(\alpha)}$, then ${\bf d}$ is a strong degree of
categoricity.

In \cite{FKM} it was shown that all strong degrees of categoricity are hyperarithmetical, and in \cite{CFS} it was shown that all degrees of categoricity are hyperarithmetical. There are currently no examples of degrees of categoricity that are not strong degrees of categoricity. Indeed, we do not even have an example of a structure that has a degree of categoricity but not strongly.

All known degrees of categoricity satisfy ${\bf0}^{(\alpha)}
\leq {\bf d} \leq {\bf 0}^{(\alpha +1)}$ for some computable
ordinal $\alpha$.  So in particular, all known
non-computable degrees of categoricity are hyperimmune. In \cite{AC}, Anderson and Csima showed that no non-computable
hyperimmunefree degree is a degree of
categoricity. They also showed that there is a $\Sigma^0_2$ degree that is not
a degree of categoricity, and that if $G$ is 2-generic (relative to a perfect tree), then deg$(G)$
is not a degree of categoricity. The question of whether there exist $\Delta^0_2$ degrees that are not degrees of categoricity remains open.

Turning to look at the question of degree of categoricity for a given structure, R. Miller showed that there exists a field that does not have a degree of categoricity \cite{Miller2009}, and Fokina, Frolov, and Kalimullin \cite{FFK} showed that there exists a rigid structure with no degree of categoricity.

In this paper, we claim that the only \emph{natural} degrees of categoricity are those of the form ${\bf0}^{(\alpha)}$ for some computable ordinal $\alpha$. By a natural degree of categoricity, we mean the degree of categoricity of a natural structure.

What do we mean by \emph{natural}? A natural structure is one which might show up in the normal course of mathematics; a structure which has been constructed, say via diagonalization, to have some computability-theoretic property is \textit{not} a natural structure. Of course, this is not a rigorous definition. Instead, we note that arguments involving natural structures tend to relativize, and so a natural structure will have property $P$ if and only if it has property $P$ on a cone (i.e., there is a Turing degree $\textbf{d}$ such that for all $\textbf{c} \geq \textbf{d}$, $P$ holds relative to $\textbf{c}$). Thus by considering arbitrary structures on a cone, we can prove results about natural structures.

The second author previously considered degree spectra of relations on a cone \cite{HT}. McCoy \cite{McCoy02} has also shown that on a cone, every structure has computable dimension $1$ or $\omega$. Here, we give an analysis of degrees of categoricity along similar lines.

Our main theorem is:
\begin{theorem}\label{thm:main1}
Let $\mc{A}$ be a countable structure. Then, on a cone: $\mc{A}$ has a strong degree of categoricity, and this degree of categoricity is ${\bf0}^{(\alpha)}$. 
\end{theorem}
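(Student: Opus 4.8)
The plan is to begin by relativizing to a cone on which $\mathcal{A}$ is computable, so that its back-and-forth relations, a Scott family for it, and the associated ordinal analysis are all computable relative to the base of the cone; passing to a cone is also what makes the relevant notions of categoricity robust, in the spirit of \cite{HT} and \cite{McCoy02}. On this cone, $\mathcal{A}$ has a Scott family of infinitary formulas, so there is a least ordinal $\alpha$, computable relative to the base, such that $\mathcal{A}$ has a formally $\Sigma^0_\alpha$ Scott family. I claim that this $\alpha$ works, i.e.\ that $\mathbf{0}^{(\alpha)}$ is a strong degree of categoricity for $\mathcal{A}$.

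The upper bound is a direct appeal to known results: by Chisholm's characterization (see Ash--Knight \cite{AshKnight00}), having a formally $\Sigma^0_\alpha$ Scott family implies $\mathcal{A}$ is relatively $\Delta^0_\alpha$-categorical, so for any copy $\mathcal{B}$ that is computable relative to the base there is a $\Delta^0_\alpha(\mathcal{B})$ isomorphism $\mathcal{A} \cong \mathcal{B}$, and such an isomorphism is $\mathbf{0}^{(\alpha)}$-computable on the cone. Hence $\mathcal{A}$ is $\mathbf{0}^{(\alpha)}$-computably categorical.

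The heart of the proof is the matching strong lower bound: I will build computable copies $\mathcal{A}_0$ and $\mathcal{A}_1$ of $\mathcal{A}$ so that every isomorphism $\mathcal{A}_0 \to \mathcal{A}_1$ computes $\mathbf{0}^{(\alpha)}$. This uses the minimality of $\alpha$: for each $\beta < \alpha$ there is no formally $\Sigma^0_\beta$ Scott family for $\mathcal{A}$, which, unpacked through the standard back-and-forth analysis, yields tuples of $\mathcal{A}$ that are indistinguishable at level $\beta$ of the back-and-forth hierarchy but become distinguished by level $\alpha$. These ``splittings'' are exactly the data fed into Ash's pairs-of-structures construction, which we organize using Montalb\'an's $\eta$-systems: from a suitable $\eta$-system extracted from the back-and-forth structure of $\mathcal{A}$, one constructs $\mathcal{A}_0$ and $\mathcal{A}_1$ in which a fixed $\mathbf{0}^{(\alpha)}$-complete set is coded, so that any isomorphism between the two copies decodes that set and therefore computes $\mathbf{0}^{(\alpha)}$. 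Since the copies are produced explicitly with this property, $\mathbf{0}^{(\alpha)}$ is a \emph{strong} degree of categoricity, and it is in particular the degree of categoricity: if $\mathcal{A}$ is $\mathbf{c}$-computably categorical, then some isomorphism $\mathcal{A}_0 \to \mathcal{A}_1$ is $\mathbf{c}$-computable, forcing $\mathbf{c} \geq \mathbf{0}^{(\alpha)}$.

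I expect the main obstacle to be the case in which $\alpha$ is a limit ordinal, which is precisely why Montalb\'an's $\eta$-system framework must be extended rather than quoted. The obvious level-by-level construction would at best force an isomorphism to compute each $\mathbf{0}^{(\beta)}$ for $\beta < \alpha$ separately, which is strictly weaker than computing $\mathbf{0}^{(\alpha)}$; to pin the degree of categoricity down to $\mathbf{0}^{(\alpha)}$ exactly, one must code $\mathbf{0}^{(\alpha)}$ itself, uniformly, inside a single pair of structures. Ash's original treatment of limit stages in an $\alpha$-system is too rigid for this, as it in effect requires all lower levels to be finalized before the limit level is addressed, so we reformulate the $\eta$-system framework at limit ordinals to carry the combinatorics at all lower levels along simultaneously and amalgamate them into genuine level-$\alpha$ behavior. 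With this more flexible framework in hand, the pairs-of-structures construction runs uniformly at limit $\alpha$ just as at successor $\alpha$, producing the strong degree of categoricity $\mathbf{0}^{(\alpha)}$.
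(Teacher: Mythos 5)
Your outline follows the paper's approach almost exactly: pass to a cone where $\mathcal{A}$, its back-and-forth relations, and a Scott family are all computable; take $\alpha$ least such that $\mathcal{A}$ is $\Delta^0_\alpha$-categorical on a cone; get the upper bound from the Scott family; and get the strong lower bound by an $\eta$-system priority construction that codes a $\Delta^0_\alpha$-complete set into the freeness structure of tuples (the paper formalizes the ``splitting'' you describe as $\beta$-free tuples, via Corollary \ref{cor:not-cat gives freeness} and Lemma \ref{lem:partial-isos-different}). Minor cosmetic difference: the paper builds one copy $\mathcal{B}$ and pairs it with $\mathcal{A}$ itself rather than constructing a fresh pair $\mathcal{A}_0, \mathcal{A}_1$, which is equivalent since $\mathcal{A}$ is computable on the cone.

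One point in your framing is backwards and worth correcting, because it could send you in the wrong direction when filling in details. You say Ash's treatment of limit levels is ``too rigid'' and must be reformulated; in fact it is Montalb\'an's $\eta$-systems that, as written, only cover successor $\eta$, and the paper's extension imports \emph{Ash's} way of handling limit ordinals into Montalb\'an's framework. Concretely: for limit $\eta$ the paper defines an $\eta$-system to use only the relations $\leq_\xi$ for $\xi < \eta$ (as Ash's $\alpha$-systems do), letting all lower levels run simultaneously, while keeping the additional control (the explicit approximation sequences $\nabla^\xi_s$ and the sharper extendability condition) that Montalb\'an's formulation provides. So the right slogan is ``Montalb\'an's control with Ash's limit behavior,'' not the reverse. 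The substantive intuition you state — that a na\"ive level-by-level argument would only force each $\mathbf{0}^{(\beta)}$ separately, and that one must code $\mathbf{0}^{(\alpha)}$ itself uniformly across the limit — is exactly right and is the reason the framework has to be modified.
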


\noindent There are three important parts to this theorem: first, that every natural structure has a degree of categoricity; second, that this degree of categoricity is a strong degree of categoricity; and third, that the degree of categoricity is of the particular form ${\bf0^{(\alpha)}}$. The ordinal $\alpha$ is the least ordinal $\alpha$ such that $\scrA$ is ${\bf0}^{(\alpha)}$-computably categorical on a cone. This is related to the Scott rank of $\mc{A}$ under an appropriate definition of Scott rank \cite{MontalbanSR}: $\alpha$ is the least ordinal $\alpha$ such that $\mc{A}$ has a $\Sigma_{\alpha+2}^{\inf}$ Scott sentence if $\alpha$ is infinite (or a $\Sigma_{\alpha + 3}$ Scott sentence if $\alpha$ is finite). While $\alpha$ may not be computable, every ordinal is computable on some cone.

The construction of a structure with degree of categoricity some d.c.e.\ (but not c.e.) degree uses a computable approximation to the d.c.e.\ degree; this requires the choice of a particular index for the approximation, and hence the argument that the resulting structure has degree of categoricity d.c.e.\ but not c.e.\ does not relativize. By our theorem, there is no possible construction which does relativize. Moreover, our theorem says something about what kinds of constructions would be required to solve the open problems about degrees of categoricity, for example whether there is a 3-c.e.\ but not d.c.e.\ degree of categoricity, or whether there is a degree of categoricity which is not a strong degree of categoricity---the proof must be by constructing a structure which is not natural, using a construction which does not relativize.

The proof of the Theorem \ref{thm:main1} also gives an effectiveness condition which, if it holds of some computable structure, means that the conclusion of the theorem is true of that structure without relativizing to a cone.

\begin{corollary}
The degrees of categoricity on a cone are those of the form $\bf{0}^{(\alpha)}$.
\end{corollary}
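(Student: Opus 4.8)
The plan is to deduce the Corollary directly from Theorem~\ref{thm:main1}, together with the already-cited result of Csima, Franklin and Shore~\cite{CFS} that each ${\bf 0}^{(\alpha)}$ is in fact a (strong) degree of categoricity, witnessed by an honestly computable structure. So there are two inclusions to establish: that every degree of categoricity \emph{on a cone} is of the form ${\bf 0}^{(\alpha)}$, and conversely that every ${\bf 0}^{(\alpha)}$ (for $\alpha$ a computable ordinal) \emph{is} a degree of categoricity on a cone.

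For the first inclusion, suppose ${\bf d}$ is a degree of categoricity on a cone; that is, there is a structure $\scrA$ and a cone with base ${\bf b}$ such that, relative to every ${\bf c} \geq {\bf b}$, the degree of categoricity of $\scrA$ (relativized to ${\bf c}$) is ${\bf d}$ relativized to ${\bf c}$, i.e.\ ${\bf d} \vee {\bf c}$. By Theorem~\ref{thm:main1}, after possibly moving to a larger base we may assume that on this cone $\scrA$ has strong degree of categoricity ${\bf 0}^{(\alpha)}$ relativized to the cone, for some ordinal $\alpha$ which is computable relative to the base. Comparing, we get ${\bf d} \vee {\bf c} = {\bf c}^{(\alpha)}$ for all ${\bf c}$ on the cone, so ${\bf d} = {\bf 0}^{(\alpha)}$ in the relativized sense, and since $\alpha$ is computable in the base, the claim is that ${\bf d}$, as a degree of categoricity in the original (unrelativized) sense, is ${\bf 0}^{(\alpha)}$ — here one uses that if a degree ${\bf d}$ is a degree of categoricity at all then its behavior on a cone already pins it down, together with the fact that ${\bf 0}^{(\alpha)}$ for computable $\alpha$ is genuinely a degree of categoricity by \cite{CFS}, so the two notions agree.

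For the converse inclusion, let $\alpha$ be any computable ordinal. By \cite{CFS}, ${\bf 0}^{(\alpha)}$ is a strong degree of categoricity, witnessed by a computable structure $\scrA$ with computable copies $\scrA_0, \scrA_1$ such that every isomorphism $\scrA_0 \to \scrA_1$ computes ${\bf 0}^{(\alpha)}$ and $\scrA$ is ${\bf 0}^{(\alpha)}$-computably categorical. These facts relativize: for any ${\bf c}$, relative to ${\bf c}$ the structure $\scrA$ is ${\bf c}^{(\alpha)}$-computably categorical (the back-and-forth / metatheorem argument of \cite{CFS} relativizes) and every isomorphism $\scrA_0 \to \scrA_1$ computes ${\bf c}^{(\alpha)}$ together with ${\bf c}$. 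Hence ${\bf 0}^{(\alpha)}$ is a degree of categoricity on the trivial cone (base ${\bf 0}$), and in particular on a cone.

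The main obstacle is bookkeeping with the relativization rather than any deep argument: one must be careful that Theorem~\ref{thm:main1} produces, for the fixed structure $\scrA$, a \emph{single} ordinal $\alpha$ that works uniformly on the whole cone, and that this $\alpha$ is computable relative to the base of the cone (so that $\alpha$ is ``genuinely'' a computable ordinal from the point of view of the relativized universe, making ${\bf c}^{(\alpha)}$ meaningful). With that in hand, matching ${\bf d}$ against ${\bf 0}^{(\alpha)}$ on the cone and invoking \cite{CFS} for the reverse direction completes the proof.
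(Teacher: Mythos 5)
Your two-direction decomposition matches the paper exactly: Theorem~\ref{thm:main1} gives that every degree of categoricity on a cone is of the form $\mathbf{0}^{(\alpha)}$, and relativizing the Csima--Franklin--Shore construction (Theorem~3.1 of \cite{CFS}) gives the converse realization. Two small caveats on the bookkeeping, neither fatal. First, in your forward direction you unwind the definition incorrectly: the paper's definition of ``$\scrA$ has degree of categoricity $\mathbf{0}^{(\alpha)}$ on a cone'' means that relative to each $\mathbf{c}$ on the cone the degree of categoricity is $\mathbf{c}^{(\alpha)}$, i.e., $\mathbf{0}^{(\alpha)}$ is read as the $\alpha$-th jump \emph{operator}, not as a fixed degree $\mathbf{d}$ with relativization $\mathbf{d}\vee\mathbf{c}$. (Indeed, for $\alpha\geq 1$ one cannot in general have $\mathbf{c}^{(\alpha)}=\mathbf{d}\vee\mathbf{c}$ uniformly on a cone.) Once the definition is read correctly, the forward inclusion is immediate from Theorem~\ref{thm:main1} with no comparison step needed. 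Second, your converse claims base $\mathbf{0}$ for the cone; the paper only asserts that the CFS proof relativizes, yielding \emph{some} cone, and that suffices for the corollary (and is also what one needs to accommodate ordinals $\alpha$ that are only computable above the base).
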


Indeed, each degree of the form $\bf{0}^{(\alpha)}$ is a degree of categoricity on a cone. To see this, examine the proof of Theorem 3.1 of \cite{CFS} showing that each $\bf{0}^{(\alpha)}$ is a degree of categoricity, and note that the proof relativizes.

In 2012, Csima, Kach, Kalimullin and Montalb\'an had an unpublished proof of Theorem \ref{thm:main1} in the case where $\mc{A}$ is $\bf{0'}$-computably categorical on a cone. That is, they showed that if $\scrA$ is $\bf{0'}$-computably categorical on a cone, but not computably categorical on a cone, then $\scrA$ has strong degree of categoricity $\bf{0'}$ on a cone. They also conjectured the general result at that time.

The second result of this paper concerns the difficulty of computing isomorphisms between two given computable copies $\mc{A}$ and $\mc{B}$ of a structure. We show that, on a cone, there is an isomorphism of least degree between $\mc{A}$ and $\mc{B}$, and that it is of c.e.\ degree.
\begin{theorem}\label{thm:main2}
Let $\mc{A}$ be a countable structure. On a cone: if $\scrA$ is  $\Delta^0_\alpha$ categorical, then for every copy $\scrB$ of $\scrA$, there is a  degree \textup{\textbf{d}} that is $\Sigma^0_{\alpha-1}$ in $\scrB$ if $\alpha$ is a successor, or $\Delta^0_\alpha$ in $\scrB$ if $\alpha$ is a limit, such that \textup{\textbf{d}} computes an isomorphism between $\scrA$ and $\scrB$ and such that all isomorphisms between $\scrA$ and $\scrB$ compute \textup{\textbf{d}}.
\end{theorem}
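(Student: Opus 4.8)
The plan is to identify $\textbf{d}$ with the degree of a single explicit set $D$ extracted from $\scrB$ --- essentially the non‑satisfaction diagram of $\scrB$ at levels below $\alpha$ --- and then to show by direct arguments that $D$ has the stated complexity, computes an isomorphism $\scrA\to\scrB$, and is computed by every isomorphism $\scrA\to\scrB$. The first step is to fix the cone. Shrinking the cone from Theorem~\ref{thm:main1}, I may assume that $\scrA$ is computable, that $\alpha$ is computable, and that $\scrA$ is $\alpha$‑friendly, so that uniformly for $\beta\le\alpha$ the $\Sigma^c_\beta$‑type of any tuple of $\scrA$ --- hence the truth in $\scrA$ of any fixed computable infinitary $\Sigma_\beta$ or $\Pi_\beta$ formula --- is computable; and $\scrB$ is an arbitrary computable copy of $\scrA$. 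Since on a cone $\Delta^0_\alpha$‑categoricity coincides with relative $\Delta^0_\alpha$‑categoricity over the base, the standard characterization of relative $\Delta^0_\alpha$‑categoricity via formally $\Sigma^0_\alpha$ Scott families (see \cite{AshKnight00}) supplies a uniformly computable formally $\Sigma^0_\alpha$ Scott family $\Phi=\{\varphi_i\}$ in which each $\varphi_i$ has the form $\bigvee_k\exists\bar u\,\psi_{i,k}(\bar y,\bar u)$ with each $\psi_{i,k}$ a computable infinitary $\Pi_{\beta_{i,k}}$ formula for a single $\beta_{i,k}<\alpha$. Henceforth all reducibilities are taken relative to the base of the cone, as is usual in cone arguments.

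Let $D$ be a coding of the set of pairs $(\bar b,\psi)$ with $\psi$ a computable infinitary $\Pi_\beta$ formula for some $\beta<\alpha$ and $\scrB\not\models\psi(\bar b)$. Because the truth of a computable $\Pi_\beta$ formula in $\scrB$ is uniformly $\Pi^0_\beta(\scrB)$, its failure is $\Sigma^0_\beta(\scrB)$, and taking the union over $\beta<\alpha$ (using that the code of $\psi$ tells us which level to use) shows that $D$ is $\Sigma^0_{\alpha-1}$ in $\scrB$ when $\alpha$ is a successor and $\Delta^0_\alpha$ in $\scrB$ when $\alpha$ is a limit; the dichotomy is exactly the point that the levels $\beta$ occurring are bounded by $\alpha-1$ in the successor case and only cofinally below $\alpha$ in the limit case. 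For $\scrA=(\omega,<)$ and $\alpha=2$ this $D$ is Turing equivalent to $\{(n,b):b\text{ has at least }n\text{ predecessors in }\scrB\}$, recovering the informal picture behind the abstract.

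Next, $D$ computes an isomorphism. I build $f=\bigcup_s p_s$ as an increasing union of finite partial isomorphisms $p_s\colon\bar a_s\mapsto\bar b_s$ with $\bar a_s$ and $\bar b_s$ realizing a common $\varphi_i\in\Phi$; since $\Phi$ is a Scott family and $\scrB\cong\scrA$, the set of such pairs is a back‑and‑forth system, so the $p_s$ are always extendible and the union is an isomorphism. The extension steps are $D$‑computable: to cover a new point of $\scrA$ I compute its Scott formula $\varphi_i$ in $\scrA$ (possible by $\alpha$‑friendliness) and search for $b^\ast$ in $\scrB$ with $\bar b_s b^\ast\models\varphi_i$, which is $D$‑semidecidable since $\bar c\models\varphi_i$ unwinds to $\exists k\,\exists\bar u\,[(\bar c\bar u,\psi_{i,k})\notin D]$ and membership in $D$ is decidable from the oracle $D$; to cover a new point $b^\ast$ of $\scrB$ I first locate, by the same semidecision dovetailed over $i$, some $\varphi_i$ realized by $\bar b_s b^\ast$, then search $\scrA$ for $a^\ast$ with $\bar a_s a^\ast\models\varphi_i$ (again possible by $\alpha$‑friendliness). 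All these searches terminate by the Scott‑family extension property together with $\scrB\cong\scrA$.

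Finally, every isomorphism $g\colon\scrA\to\scrB$ computes $D$: for a computable $\Pi_\beta$ formula $\psi$ with $\beta<\alpha$ we have $\scrB\not\models\psi(\bar b)$ iff $\scrA\not\models\psi(g^{-1}(\bar b))$, and the latter is computable from $g$ (which yields $g^{-1}(\bar b)$) by $\alpha$‑friendliness of $\scrA$. Thus $D$ lies below every isomorphism and also computes one, so $\textbf{d}:=\deg(D)$ is the least degree of an isomorphism $\scrA\to\scrB$ and has the stated complexity. I expect the real obstacle to be the first paragraph: arranging on a single cone that $\alpha$ is computable, that $\scrA$ is $\alpha$‑friendly for that $\alpha$, that $\Delta^0_\alpha$‑categoricity upgrades to relative $\Delta^0_\alpha$‑categoricity over the base, and that the Scott family can be taken with each inner $\Pi$‑disjunct at a single level below $\alpha$ --- it is precisely this level accounting that forces the successor/limit split in the complexity of $D$ --- whereas the termination of the back‑and‑forth searches is then a routine application of the Scott‑family conditions.
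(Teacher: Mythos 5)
Your proposal is correct and follows the same basic strategy as the paper's proof: shrink the cone so that $\scrA$ has a computable formally $\Sigma^0_\alpha$ Scott family with each inner disjunct at a single $\Pi_\beta$ level below $\alpha$, and then identify a "diagram-like" set whose degree (joined with $\scrB$ and the base) is the least degree of an isomorphism and has the claimed arithmetic complexity. The one genuine difference is the choice of anchoring set. You take $D$ to record non-satisfaction in $\scrB$ of \emph{all} $\textbf{c}$-computable $\Pi_\beta$ formulas with $\beta<\alpha$; the paper instead works with the satisfaction set $S(\scrB)$ of the Scott formulas themselves (which sits a level too high) and then trades it down for a set $T(\scrB)$ involving a bounded existential search for $\Pi_\beta$-witnesses to the Scott formulas, so as to land at the correct level. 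Your $D$ is a priori much more informative than $T(\scrB)$, and your route requires the additional observation that every isomorphism $g$, together with the cone base (which decides the $\Delta^0_\alpha$-elementary diagram of the fixed structure $\scrA$), can decide truth of every $\Pi_\beta$ formula in $\scrB$ by pullback; once that is in place, the complexity analysis and the $D$-computable back-and-forth are routine and match the paper. The only minor gap is that $\alpha$-friendliness alone does not immediately give decidability of $\Pi_\beta$-truth in $\scrA$ for all $\beta<\alpha$; but, as you implicitly do, one can simply absorb the full $\Delta^0_\alpha$-diagram of the single fixed structure $\scrA$ into the cone base.
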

\noindent The degree $\textbf{d}$ is the least degree of an isomorphism between $\mc{A}$ and $\mc{B}$.

We begin in Section \ref{sec:cone} by giving the technical definitions for what we mean by ``on a cone.'' In Section \ref{sec:ce-degree} we prove \ref{thm:main2}. In Section \ref{sec:notcconanycone} we prove a stronger version of Theorem \ref{thm:main1} in the restricted case of structures which are $\bf{0'}$-computably categorical on a cone; it will follow that the only possible degrees of categoricity on a cone for such structures are $\bf{0}$ and $\bf{0'}$. In order to prove the general case of Theorem \ref{thm:main1}, we need to use the method of $\alpha$-systems. These were introduced by Ash, see \cite{AshKnight00}. Montalb\'an \cite{Montalban} introduced $\eta$-systems, which are similar to Ash's $\alpha$-systems but give more control. They also deal with limit ordinals in a different way. We need the extra control of Montalb\'an's $\eta$-systems, but we need to deal with limit ordinals as in Ash's $\alpha$-systems. So in Section \ref{sec:metatheorem} we introduce a modified version of Montalb\'an's $\eta$-systems. We conclude in Section \ref{sec:mainthm} with a complete proof of Theorem \ref{thm:main1}.

\section{Relativizing to a Cone}\label{sec:cone}

A \textit{cone of Turing degrees} is a set $C_{\textbf{d}} = \{ \textbf{c} : \textbf{c} \geq \textbf{d} \}$. Martin \cite{Martin68} showed that under set-theoretic assumptions of determinacy, every set of Turing degrees either contains a cone or is disjoint from a cone. Noting that every countable intersection of cones contains a cone, we see that we can form a $\{0,1\}$-valued measure on sets of degrees by assigning measure one to those sets which contain a cone. In this paper, all of the sets of degrees which we will consider arise from Borel sets, and  by Borel determinacy \cite{Martin75}, such sets either contain or are disjoint from a cone.

If $P$ is a statement which relativizes to any degree, we say that $P$ holds on a cone if there is a degree $\textbf{d}$ (the base of the cone) such that for all $\textbf{c} \geq \textbf{d}$, $P$ holds relative to $\textbf{c}$. Thus a statement holds on a cone if and only if it holds almost everywhere relative to the Martin measure. In the rest of this section, we will relativize the definitions we are interested in.

\begin{definition}\label{def:cconacone} The structure $\scrA$ is \emph{computably categorical on the cone above} $\textbf{d}$ if for all $\textbf{c} \geq \textbf{d}$ whenever $\scrB$ and $\scrC$ are $\textbf{c}$-computable copies of $\scrA$, there exists a $\textbf{c}$-computable isomorphism between $\scrB$ and $\scrC$. More generally, a structure is \emph{$\Delta^0_\alpha$ categorical on the cone above} $\textbf{d}$ if for all $\textbf{c} \geq \textbf{d}$ whenever $\scrB$ and $\scrC$ are $\textbf{c}$-computable copies of $\scrA$, there exists a $\Delta^0_\alpha(\textbf{c})$-computable isomorphism between $\scrB$ and $\scrC$.
\end{definition}

Note that even if $\alpha$ is not computable, there is a cone on which $\alpha$ is computable, and for $\textbf{c}$ on this cone, $\Delta^0_\alpha(\textbf{c})$ makes sense. In a similar way, we do not have to assume that the structure $\mc{A}$ is computable.

Recall that a computable structure $\scrA$ is relatively $\Delta^0_\alpha$ categorical if for all $\scrB \iso \scrA$, some isomorphism from $\scrA$ onto $\scrB$ is $\Delta^0_\alpha(\scrB)$, and that there exist structures that are $\Delta^0_\alpha$ categorical but not relatively so \cite{GHKMMRS}. If we were to modify the definition of relatively $\Delta^0_\alpha$ categorical to be on a cone, it would be equivalent to Definition \ref{def:cconacone}. That is, on a cone, there is no difference between relatively $\Delta^0_\alpha$ categorical and $\Delta^0_\alpha$ categorical.

The notion of relatively $\Delta^0_\alpha$ categoricity is intimately related to that of a Scott family.

\begin{notation} All formulas in this paper will be infinitary formulas, that is, formulas in $L_{\omega_1, \omega}$. See Chapter 6 of \cite{AshKnight00} for background on infinitary formulas and computable infinitary formulas. We will denote by $\Sigma_\alpha^{\inf}$ the infinitary $\Sigma_\alpha$ formulas and by $\Sigma_\alpha^{\comp}$ the computable $\Sigma_\alpha$ formulas.
\end{notation}

\begin{definition} A \emph{Scott family} for a structure $\scrA$ is a countable family $\Phi$ of formulas over a finite parameter such that
\begin{itemize}
\item for each $\abar \in \scrA$, there exists $\varphi \in \Phi$ such that $\scrA \models \varphi(\abar)$
\item if $\varphi \in \Phi$, $\scrA \models \varphi(\abar)$, and $\scrA \models \varphi(\bbar)$, then there is an automorphism of $\scrA$ taking $\abar$ to $\bbar$.
    \end{itemize}
\end{definition}

It follows from work of Scott (see \cite{AshKnight00}) that every countable structure has a Scott family consisting of $\Sigma^{\inf}_\alpha$ formulas for some countable ordinal $\alpha$.

\begin{theorem}[Ash \cite{AshCatinhypdegrees}, see \cite{AshKnight00}]\label{thm:catandScott} A computable structure $\scrA$ is relatively $\Delta^0_\alpha$ categorical if and only if it has a Scott family which is a c.e.\ set of $\Sigma^{\comp}_\alpha$ formulas.
\end{theorem}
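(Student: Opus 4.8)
The plan is to prove both directions by relating $\Delta^0_\alpha$ isomorphisms to back-and-forth systems built from $\Sigma^{\comp}_\alpha$ formulas. For the forward direction, suppose $\scrA$ is relatively $\Delta^0_\alpha$ categorical. First I would pass to a ``canonical'' copy: by effectively enumerating the atomic diagram of $\scrA$ we get a computable structure, and the relativized $\Delta^0_\alpha$ categoricity tells us that for every copy $\scrB$ there is a $\Delta^0_\alpha(\scrB)$ isomorphism $\scrA\to\scrB$. The key technical step is to build, over a finite tuple of parameters $\cbar$, a family $\Phi$ of $\Sigma^{\comp}_\alpha$ formulas indexing the back-and-forth relations: for a tuple $\abar$, the defining formula should say ``$\abar$ lies in the same $\Delta^0_\alpha$-orbit as some fixed representative,'' and one shows via a pairing/forcing-the-generic-copy argument (embed $\abar$ into a sufficiently generic computable copy and use the isomorphism) that this can be expressed by a $\Sigma^{\comp}_\alpha$ formula uniformly, giving a c.e.\ set. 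The second Scott-family clause — that $\scrA\models\varphi(\abar)$ and $\scrA\models\varphi(\bbar)$ forces an automorphism matching them — follows because if $\abar,\bbar$ satisfy the same such formula one constructs two copies of $\scrA$ agreeing on initial data, applies $\Delta^0_\alpha$ categoricity to get an isomorphism, and pieces together an automorphism by a back-and-forth using that all the relevant relations are $\Sigma^{\comp}_\alpha$-definable.

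For the converse, suppose $\scrA$ has a c.e.\ Scott family $\Phi$ of $\Sigma^{\comp}_\alpha$ formulas over parameters $\cbar$. Given $\scrB\iso\scrA$ (with $\cbar$ sitting inside $\scrB$ as well, or a matching tuple chosen), I would build the isomorphism $f:\scrA\to\scrB$ by a standard back-and-forth construction carried out relative to $\Delta^0_\alpha(\scrB)$. At stage $s$ we have a finite partial map with domain tuple $\abar$ and range tuple $\bbar$ such that $\scrA\models\varphi(\abar)$ and $\scrB\models\varphi(\bbar)$ for some $\varphi\in\Phi$; to extend by a new element $a$ of $\scrA$ we find (using the Scott family) a formula $\psi\in\Phi$ with $\scrA\models\psi(\abar,a)$, and since $\psi$ is $\Sigma^{\comp}_\alpha$, the statement $\exists y\,\psi(\bbar,y)$ is also $\Sigma^{\comp}_\alpha$ and holds in $\scrB$ (because $\scrB\iso\scrA$ and $\scrA$ satisfies it via the automorphism moving $\bbar$'s preimage-type to $\abar$'s type — here is where the automorphism clause is used), so a witness $b$ can be found $\Delta^0_\alpha(\scrB)$-effectively. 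The Scott-family clause guarantees the back-and-forth does not get stuck. Symmetrically we meet surjectivity requirements. The resulting $f$ is $\Delta^0_\alpha(\scrB)$, and since this works for every $\scrB$, $\scrA$ is relatively $\Delta^0_\alpha$ categorical.

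The main obstacle is the forward direction: extracting a \emph{c.e.} Scott family of \emph{$\Sigma^{\comp}_\alpha$} formulas from the mere existence of $\Delta^0_\alpha$ isomorphisms. One has to show that the orbit of a tuple under $\Delta^0_\alpha$-isomorphisms is captured by a single computable infinitary $\Sigma_\alpha$ formula and that these formulas can be listed computably. This is typically done by analyzing the Ash--Nerode-style back-and-forth relations $\le_\alpha$ and showing, by a priority/forcing argument building a generic computable copy of $\scrA$, that a $\Delta^0_\alpha$ isomorphism must respect $\le_\alpha$-classes, hence these classes refine to orbits and each is $\Sigma^{\comp}_\alpha$-definable over a finite parameter; the uniformity (c.e.-ness) comes from the effectivity of the enumeration of $\Sigma^{\comp}_\alpha$ formulas together with a computable bound on which formulas can appear. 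I would cite the machinery of computable back-and-forth relations from Chapter 18 of \cite{AshKnight00} to handle this step cleanly rather than redeveloping it.
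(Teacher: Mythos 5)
The paper does not prove this theorem; it is cited from Ash and from Ash--Knight's book, so there is no in-paper argument to compare against. Your overall plan --- a back-and-forth construction for the ``Scott family $\Rightarrow$ relatively $\Delta^0_\alpha$ categorical'' direction, and a forcing/genericity argument for the converse --- is exactly the standard route from Ash--Knight. The backward direction is essentially sound: given matching tuples on a Scott-family formula, one transports the automorphism clause from $\scrA$ to $\scrB$ by pulling back through some fixed isomorphism $\scrA\to\scrB$, and witnesses to a $\Sigma^{\comp}_\alpha$ formula in $\scrB$ are found by a $\Delta^0_\alpha(\scrB)$-search, since the $\Pi^{\comp}_\beta$ matrices (for $\beta<\alpha$) are decidable uniformly in $\beta$ from $\Delta^0_\alpha(\scrB)$. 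One small omitted step is locating a parameter tuple $\cbar'$ in $\scrB$ corresponding to $\cbar$ before the back-and-forth begins, but that is routine.

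The forward direction is where the imprecision lies, and it is worth naming concretely. You describe the Scott formula for $\abar$ as saying ``$\abar$ lies in the same $\Delta^0_\alpha$-orbit as some fixed representative,'' and later say the $\le_\alpha$-classes ``refine to orbits and each is $\Sigma^{\comp}_\alpha$-definable.'' Neither is quite right. The natural formula capturing ``$\abar\le_\alpha\bar x$'' is $\Pi^{\inf}_\alpha$, not $\Sigma^{\comp}_\alpha$, so $\le_\alpha$-classes are not, on their face, $\Sigma^{\comp}_\alpha$-definable. And ``a $\Delta^0_\alpha$ isomorphism must respect $\le_\alpha$-classes'' is vacuous: every isomorphism does, since $\le_\alpha$ is an isomorphism invariant. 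What the forcing argument actually produces is a parameter tuple $\cbar$ (coming from the forcing condition) over which no tuple is $\alpha$-free --- equivalently, for each $\abar$ there is some $\beta<\alpha$ and finite $\Pi_\beta$-level data that already pins $\abar$ down up to automorphism over $\cbar$. It is that drop from $\alpha$ to $\beta<\alpha$ which makes the resulting Scott formulas land in $\Sigma^{\comp}_\alpha$ (compare Proposition 17.6 as quoted in the paper). You flag the gap and propose to cite the machinery, which is what the paper itself does, but a carried-out proof would need the non-freeness step made explicit; the ``define the $\Delta^0_\alpha$-orbit directly'' idea as written would not yield formulas of the right complexity.
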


Now we can see the power of working on a cone.

\begin{remark}\label{rem:allstructuresalphacatoncone}
Let $\scrA$ be a countable structure. Then $\scrA$ has a Scott family consisting of $\Sigma^{\inf}_\alpha$ formulas for some countable ordinal $\alpha$. Let ${\bf d}$ be such that $\scrA$ and $\alpha$ are ${\bf d}$-computable, and such that the Scott family for $\scrA$ is \ce and consists of $\Sigma^{\comp}_\alpha$ formulas relative to ${\bf d}$. Then $\scrA$ is $\Delta^0_\alpha$ categorical on the cone above ${\bf d}$. That is, every countable structure is $\Delta^0_\alpha$ categorical on a cone for a some $\alpha$.
\end{remark}

We now give the definitions needed to discuss degrees of categoricity on a cone.

\begin{definition} The structure $\scrA$ has \emph{degree of categoricity $\textup{\textbf{d}}$ relative to $\textup{\textbf{c}}$} if $\textbf{d}$ can compute an isomorphism between any two $\textbf{c}$-computable copies of $\scrA$, and moreover $\textbf{d} \geq \textbf{c}$ is the least degree with this property. If in addition to this there exist two $\textbf{c}$-computable copies of $\scrA$ such that for every isomorphism $f$ between them, $f \oplus \textbf{c} \geq_T \textbf{d}$, then we say $\scrA$ has \emph{strong degree of categoricity $\textbf{d}$ relative to $\textbf{c}$}.
\end{definition}

\begin{definition}We say that a structure $\scrA$ has a \emph{(strong) degree of categoricity on a cone}, if there is some $\textbf{d}$ such that for every $\textbf{c} \geq \textbf{d}$, $\scrA$ has a (strong) degree of categoricity relative to \bc.
\end{definition}

\begin{definition} We say that a structure $\scrA$ has \emph{(strong) degree of categoricity ${\bf 0^{(\alpha)}}$ on a cone}, if there is some $\textbf{d}$ such that for every $\textbf{c} \geq \textbf{d}$, $\scrA$ has (strong) degree of categoricity ${\bf c^{(\alpha)}}$ relative to \bc.
\end{definition}

\section{Isomorphism of C.E.\ Degree}\label{sec:ce-degree}

Theorem \ref{thm:main2} follows from the following more technical statement.

\begin{thm}\label{thm:ce-degree}
Let $\mc{A}$ be a structure. Suppose that $\mc{A}$ is $\Delta^0_\alpha$ categorical on a cone. Then there is a degree \textup{\textbf{c}} such that for every copy $\mc{B}$ of $\mc{A}$, there is a degree \textup{\textbf{d}} that is $\Sigma^0_{\alpha - 1}$ in and above $\mc{B} \oplus \textup{\textbf{c}}$ if $\alpha$ is a successor ordinal, or $\Delta^0_\alpha$ in and above $\mc{B} \oplus \textup{\textbf{c}}$ if $\alpha$ is a limit ordinal, such that
\begin{enumerate}
	\item \textup{\textbf{d}} computes some isomorphism between $\mc{A}$ and $\mc{B}$ and
	\item for every isomorphism $f$ between $\mc{A}$ and $\mc{B}$, $f \oplus \textup{\textbf{c}} \geq_T \textup{\textbf{d}}$.
\end{enumerate}
\end{thm}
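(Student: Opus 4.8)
The plan is to pass to a cone where the relativization of Ash's criterion (Theorem~\ref{thm:catandScott}) applies, run a back--and--forth driven by a c.e.\ Scott family in which the running data carries explicit $\Pi_{<\alpha}$ certificates, and then read off the exact complexity of the canonical isomorphism that this procedure produces. First I would fix the base degree $\textbf{c}$: using Remark~\ref{rem:allstructuresalphacatoncone} and the hypothesis that $\scrA$ is $\Delta^0_\alpha$ categorical on a cone, choose $\textbf{c}$ above the base of that cone and large enough that $\scrA$ and the ordinal $\alpha$ are $\textbf{c}$-computable and, by Theorem~\ref{thm:catandScott} relativized to $\textbf{c}$, that $\scrA$ has a $\textbf{c}$-c.e.\ Scott family $\Phi$ consisting of (relatively) computable $\Sigma_\alpha$ formulas over finitely many parameters. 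After relativizing everything to $\textbf{c}$ I may assume $\scrA$ and $\alpha$ are computable and $\Phi$ is a c.e.\ Scott family of $\Sigma^{\comp}_\alpha$ formulas. Fix a copy $\scrB$ of $\scrA$ and set $X=\scrB\oplus\textbf{c}$; the degree $\textbf{d}$ will be $\deg(f\oplus X)$ for an isomorphism $f$ built below.

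Next I would set up the \emph{certified back--and--forth}. A condition is a pair $(p,\theta)$ where $p$ is a finite partial injection $\scrA\to\scrB$ and $\theta$ is a computable $\Pi_\beta$ formula with $\beta<\alpha$ such that $\exists\,\bar y\,\theta(\bar x,\bar y)$ is a disjunct of some $\varphi\in\Phi$, $\dom(p)$ decomposes as $\bar a\,\bar y^{\scrA}$ and $\ran(p)$ as $\bar b\,\bar y^{\scrB}$, and $\scrA\models\theta(\bar a,\bar y^{\scrA})$, $\scrB\models\theta(\bar b,\bar y^{\scrB})$. The key \emph{extension lemma} is that any condition $(p,\theta)$ extends to a condition $(q,\theta')$, again with $\theta'$ computable $\Pi_{<\alpha}$, whose domain contains a prescribed $a'\in\scrA$ (and symmetrically for a prescribed $b'\in\scrB$). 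I would prove it by fixing any isomorphism $h\colon\scrA\to\scrB$, applying the Scott--family clause to $h(\bar a)$ and $\bar b$ (they satisfy the common $\varphi\in\Phi$ coming from $\theta$) to get an automorphism $\sigma$ of $\scrB$ with $\sigma(h(\bar a))=\bar b$, noting that $\sigma h$ is an isomorphism matching $\bar a$ to $\bar b$, transporting $a'$ along $\sigma h$, and reading off a disjunct of a Scott formula for the enlarged tuple in $\scrA$ together with the matching witnesses in $\scrB$.

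Now I would iterate the extension lemma, alternately absorbing the next element of $\scrA$ into the domain and the next element of $\scrB$ into the range, each time making the $<$-least admissible choice in a fixed effective search (least new element, least index in the enumeration of $\Phi$, least witnesses). This yields a canonical $f\colon\scrA\to\scrB$, and the only non-effective step is deciding whether a candidate certificate $\theta\in\Pi^{\comp}_\beta$ holds in $\scrA$ and $\scrB$; the \emph{failure} of such a certificate is a $\Sigma^0_\beta(X)$ fact. When $\alpha=\gamma+1$ is a successor every $\beta\le\gamma=\alpha-1$, so failures are $\Sigma^0_{\alpha-1}(X)$, the whole construction runs as a priority/guessing construction in which the set of certificates ever seen to fail is enumerated, and a true--stages style bookkeeping lets one recover $f$ Turing equivalently over $X$ from that $\Sigma^0_{\alpha-1}(X)$ set; hence $f\oplus X$ has $\Sigma^0_{\alpha-1}(X)$ degree. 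When $\alpha$ is a limit the certificate ranks $\beta<\alpha$ are unbounded, so the construction runs only in $\bigoplus_{\beta<\alpha}X^{(\beta)}$ and $f\oplus X$ is $\Delta^0_\alpha(X)$ (which already is a $\Delta^0_\alpha(X)$-degree). In either case put $\textbf{d}=\deg(f\oplus X)$; then $\textbf{d}$ computes the isomorphism $f$ and has the stated complexity, which gives Theorem~\ref{thm:main2} once one observes that for any isomorphism $g$ the degree $g\oplus\textbf{c}$ computes $\scrB$ and hence $X$.

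The remaining, and hardest, step is minimality: that every isomorphism $g\colon\scrA\to\scrB$ satisfies $g\oplus\textbf{c}\ge_T f\oplus X$. The point of fixing the least--choice discipline above is that $g$ itself supplies a run of the construction: along $g$ one obtains, for each initial segment of the back--and--forth, a disjunct of a Scott formula together with witnesses (pulled from $\dom(g)$), i.e.\ a coherent system of $\Pi_{<\alpha}$ certificates that are genuinely true, so that $g\oplus X$ can carry the construction forward without ever having to independently verify a $\Pi_{<\alpha}$ fact; the least--choice rule then forces this run to produce exactly the canonical $f$, whence $f\le_T g\oplus X\le_T g\oplus\textbf{c}$. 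The delicate part, and where I expect the real work to lie, is arranging the search order and the exact form of the certificates so that (a) the canonical run never needs a certificate that $g$ fails to validate with a finite piece of itself, and (b) in the successor case the construction's decisions stay $\Sigma^0_{\alpha-1}(X)$-enumerable rather than slipping up to $\Delta^0_\alpha(X)$; both come down to keeping certificates at the optimal level $\Pi^{\comp}_{\alpha-1}$ and folding every needed witness into the partial map, so that the only genuine queries are negative $\Sigma^0_{\alpha-1}(X)$ queries and the canonical choices are absolute across all isomorphisms.
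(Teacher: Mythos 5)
Your approach is genuinely different from the paper's, and it has real gaps.

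The paper's proof does \emph{not} build a canonical isomorphism. It passes to a cone where $\scrA$ has a $\textbf{c}$-c.e.\ Scott family $S$ of formulas $(\exists\bar y)\varphi$ with $\varphi\in\Pi^{\comp}_{<\alpha}$, arranged so that each tuple satisfies a unique formula and so that $\textbf{c}$ can decide all the relevant facts about $\scrA$ itself. It then defines the invariant $S(\scrB)=\{(\bar b,\varphi):\scrB\models\varphi(\bar b),\ \varphi\in S\}$ and takes $\textbf{d}=\deg(S(\scrB)\oplus\scrB\oplus\textbf{c})$. Showing $\textbf{d}\le_T g\oplus\textbf{c}$ for every isomorphism $g$ is then immediate: $g$ transports $S(\scrA)$ (which $\textbf{c}$ computes) to $S(\scrB)$. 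Showing $\textbf{d}$ computes \emph{some} isomorphism is a back-and-forth from $S(\scrA)$ and $S(\scrB)$. For the complexity bound the paper replaces $S(\scrB)$ with an explicitly $\Pi^0_{\alpha-1}(\scrB\oplus\textbf{c})$ (resp.\ $\Delta^0_\alpha$) set $T(\scrB)$ of triples $(\bar a,\bar b,\varphi)$ recording whether some witness $\bar c\leq_{\mathrm{lex}}\bar b$ makes $\varphi(\bar a,\bar c)$ true in $\scrB$; bounding the existential by $\bar b$ is what lowers $\Sigma^0_\alpha$ to $\Pi^0_{\alpha-1}$, and the two sets are shown interreducible over $\scrB\oplus\textbf{c}$.

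Your proposal instead constructs a single canonical isomorphism $f$ by a ``certified back-and-forth'' with a least-choice discipline, and then argues both complexity and minimality for that $f$. Two concrete problems. First, your minimality argument is internally inconsistent: you say $g$ supplies a coherent run ``without ever having to independently verify a $\Pi_{<\alpha}$ fact,'' witnesses being ``pulled from $\dom(g)$,'' and then assert that the least-choice rule ``forces this run to produce exactly the canonical $f$.'' But the run supplied by $g$ uses $g$'s witnesses, which in general are not the least valid ones; to find the least valid witness you must \emph{refute} the earlier invalid candidates, which is exactly an independent verification of $\Pi^{\comp}_{<\alpha}$ facts in $\scrB$. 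That verification is in fact possible from $g\oplus\scrB\oplus\textbf{c}$---transport the candidate to $\scrA$ via $g$ and use the decision power that was packed into $\textbf{c}$---but you never make this transport argument, and without it the claim that every $g$ reproduces the same canonical $f$ does not follow. Second, the complexity bound in the successor case is left as a hand-wave (``true-stages style bookkeeping,'' ``the set of certificates ever seen to fail is enumerated''); it is not automatic that the oracle driving your least-choice construction is of $\Sigma^0_{\alpha-1}(X)$ degree rather than $\Delta^0_\alpha(X)$, since the set of candidates tried depends recursively on which earlier ones failed. The paper's lexicographic-bounding device (the set $T(\scrB)$) is precisely the concrete replacement for this hand-wave, and you would need something like it to close the gap. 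In short, what the paper buys by working with the invariant $S(\scrB)$/$T(\scrB)$ rather than a canonical $f$ is that both the minimality and the complexity calculation become short and explicit; your route can likely be pushed through, but as written the two central steps are asserted rather than proved.
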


Before giving the proof, we consider two motivating examples.

\begin{exam}
Let $\mc{N}$ be the standard presentation of $(\omega,<)$. If $\mc{A}$ is any other presentation, let $\Succ(\mc{A})$ be the successor relation in $\mc{A}$. Then the unique isomorphism between $\mc{N}$ and $\mc{A}$ has the same Turing degree as $\Succ(\mc{A})$. Note that $\Succ(\mc{A})$ is $\Pi^0_1$.
\end{exam}

\begin{exam}
Let $\mc{V}$ be an infinite-dimensional $\mathbb{Q}$-vector space with a computable basis. If $\mc{W}$ is any other presentation of $\mc{V}$, let $\Indep(\mc{W})$ be the independence relation in $\mc{W}$. Then any isomorphism between $\mc{V}$ and $\mc{W}$ computes $\Indep(\mc{W})$, and $\Indep(\mc{W})$ computes a basis for $\mc{W}$ and hence an isomorphism between $\mc{V}$ and $\mc{W}$. Note that $\Indep(\mc{W})$ is $\Pi^0_1$.
\end{exam}

Theorem \ref{thm:main2} says that this is the general situation for natural structures.

\begin{proof}[Proof of Theorem \ref{thm:ce-degree}]
Let \textbf{c} be a degree such that $\mc{A}$ is \textbf{c}-computable and $\Delta^0_\alpha$-categorical on the cone above \textbf{c}. By increasing \textbf{c} to absorb the effectiveness conditions of Theorem 17.7 and Proposition 17.6 of \cite{AshKnight00}, $\scrA$ has a c.e.\ Scott family $S$ consisting of $\Sigma^{\comp}_\alpha$ formulas relative to \textbf{c}. Increasing \textbf{c}, we may assume that $S$ consists of formulas of the form $(\exists \bar{x}) \varphi$ where $\varphi$ is $\Pi^{\comp}_\beta$ relative to \textbf{c} for some $\beta < \alpha$. Further increasing \textbf{c}, we may assume that \textbf{c} can decide whether two formulas from $S$ are satisfied by the same elements. Then we can replace $S$ by a Scott family in which every tuple from $\mc{A}$ satisfies a unique formula from $S$. Finally, by replacing \textbf{c} with a higher degree, we may assume that \textbf{c} can compute, for an element of $\mc{A}$, the unique formula of $S$ which it satisfies, and can decide, for each tuple of the appropriate arity, whether or not it is a witness to the existential quantifier in that formula. This is the degree \textbf{c} from the statement of the theorem.

Let $\mc{B}$ be a copy of $\mc{A}$. 
Consider the set \[ S(\mc{B}) = \{ (\bar{b},\varphi) : \mc{B} \models \varphi(\bar{b}), \varphi \in S\}. \]
Let \textbf{d} be the degree of $S(\scrB) \join \scrB \join \textbf{c}$.
First, note that the set \[ S(\mc{A}) = \{ (\bar{a},\varphi) : \mc{A} \models \varphi(\bar{a}), \varphi \in S\} \] is \textbf{c}-computable. If $f$ is an isomorphism between $\mc{A}$ and $\mc{B}$, then $f \oplus \textbf{c}$ computes $S(\mc{A})$. Then using $f$ and $S(\mc{A})$, we can compute $S(\mc{B})$. Thus
\[ f \oplus \textbf{c} \geq_T S(\mc{B}) \oplus \mc{B} \oplus \textbf{c} \equiv_T \textbf{d} \] for every isomorphism $f$ between $\mc{A}$ and $\mc{B}$.

On the other hand, $\textbf{c}$ computes $S(\mc{A})$. Using $S(\mc{B})$ and $S(\mc{A})$ we can compute an isomorphism between $\mc{A}$ and $\mc{B}$. So there is an isomorphism $f$ between $\mc{A}$ and $\mc{B}$ such that
\[ f \oplus \textbf{c} \equiv_T S(\mc{B}) \oplus \mc{B} \oplus \textbf{c} \equiv_T \textbf{d}. \]

We now introduce a related set $T(\mc{B})$. We will show that $T(\mc{B}) \oplus \mc{B} \oplus \textbf{c} \equiv_T \textbf{d}$. If $\alpha$ is a successor ordinal, then $T(\mc{B})$ will be $\Pi^0_{\alpha-1}$ in $\mc{B}\oplus \textbf{c}$, and if $\alpha$ is a limit ordinal then $T(\mc{B})$ will be $\Delta^0_{\alpha}$ in $\mc{B}\oplus \textbf{c}$. Thus \textbf{d} will be a degree of the appropriate type. We may consider the elements of $\mc{B}$ to be ordered, and hence order tuples from $\mc{B}$ via the lexicographic order. Let $T(\mc{B})$ be the set of tuples $(\bar{a},\bar{b},\varphi)$ where:
\begin{enumerate}
	\item $\varphi(\bar{x},\bar{y})$ is a \textbf{c}-computable $\Pi^0_\beta$ formula, for some $\beta < \alpha$,
	\item $(\exists \bar{y}) \varphi(\bar{x},\bar{y})$ is in $S$, and
	\item $\mc{B} \models \varphi(\bar{a},\bar{c})$, for some $\bar{c} \leq \bar{b}$ in the lexicographical ordering of tuples from $\scrB$.
\end{enumerate}

It is easy to see that if $\alpha$ is a successor ordinal, then $T(\mc{B})$ is $\Pi^0_{\alpha-1}$ in $\mc{B}\oplus \textbf{c}$, and if $\alpha$ is a limit ordinal then $T(\mc{B})$ is $\Delta^0_{\alpha}$ in $\mc{B}\oplus \textbf{c}$. Now we will argue that $T(\mc{B}) \oplus \mc{B} \oplus \textbf{c} \equiv_T S(\mc{B}) \oplus \mc{B} \oplus \textbf{c}$.

Suppose we want to check whether $(\bar{a},\bar{b},\varphi) \in T(\mc{B})$ using $S(\mc{B}) \oplus \mc{B} \oplus \textbf{c}$. Using \textbf{c}, we first compute whether (1) and (2) hold for $\varphi$. Then using $S(\mc{B}) \oplus \mc{B} \oplus \textbf{c}$ we can compute an isomorphism $f \colon \mc{B} \to \mc{A}$. Now for each $\bar{c} \leq \bar{b}$ in $\scrB$, $\scrB \models \varphi(\bar{a}, \bar{c})$ if and only if $\scrA \models \varphi(f(\bar{a}), f(\bar{c}))$. In $\scrA$, using \textbf{c} we can decide whether $\scrA \models \varphi(f(\bar{a}), f(\bar{c}))$.

On the other hand, to see whether $(\bar{a},(\exists \bar{y}) \varphi(\bar{x},\bar{y}))$ is in $S(\mc{B})$ using $T(\mc{B})$, look for $\bar{b}$ and $\psi$ such that $(\bar{a},\bar{b},\psi) \in T(\mc{B})$. Some such $\psi$ and witness $\bar{b}$ must exist, since $\bar{a}$ satisfies some formula from $S$. Then $(\bar{a},(\exists \bar{y}) \varphi(\bar{x},\bar{y})) \in S(\mc{B})$ if and only if $\varphi = \psi$ (recall that we assumed that each element of $\mc{A}$ satisfied a unique formula from the Scott family).
\end{proof}

\section{Not computably categorical on any cone} \label{sec:notcconanycone}

This section is devoted to the proof of Theorem \ref{thm:main1} for structures which are $\bf{0'}$-computably categorical on a cone. The general case of the theorem will require the $\eta$-systems developed in the next section, and will be significantly more complicated, so the proof of this simpler case should be helpful in following the proof in the general case, and in fact, we have a slightly stronger theorem in this case. We first recall some definitions from \cite{AshKnight00}.


\begin{definition}[Back-and-forth relations] For a structure $\scrA$ tuples $\bar{a},\bar{b} \in \scrA$ of the same length
\begin{itemize}
\item $\bar{a} \leq_0 \bar{b}$ if and only if for every quantifier-free formula $\varphi(\bar{x})$ with G\"odel number less than $length(\bar{a})$, if $\scrA \models \varphi(\bar{a})$ then $\scrB \models \varphi(\bar{b})$.
\item for $\alpha > 0$, $\bar{a} \leq_\alpha \bar{b}$ if and only if, for each $\bar{d}$ in $\scrA$ and each $0 \leq \beta < \alpha$, there exists $\bar{c}$ in $\scrA$ such that $\bar{b}, \bar{d} \leq_\beta \bar{a}, \bar{c}$.
\end{itemize}
\end{definition}

\begin{definition}[p. 269 Ash-Knight] For tuples $\cbar$ and $\abar$ in $\scrA$, we say that $\abar$ is $\alpha$-free over $\cbar$ if for any $\abar_1$ and for any $\beta < \alpha$, there exist $\abar'$ and $\abar'_1$ such that $\cbar,\abar,\abar_1 \leq_\beta \cbar, \abar', \abar_1'$ and $\cbar, \abar' \nleq_\alpha \cbar, \abar$.
\end{definition}

\begin{definition}[p. 241 Ash-Knight] A structure $\mc{A}$ is \textit{$\alpha$-friendly} if for $\beta < \alpha$, the standard back-and-forth relations $\leq_\beta$ are c.e.\ uniformly in $\beta$.
\end{definition}

There is a version of Theorem \ref{thm:catandScott} for the non-relative notion of categoricity. The following result is the part of it that we will need.

\begin{proposition}[Prop 17.6 from \cite{AshKnight00}] Let $\scrA$ be a computable structure. Suppose $\scrA$ is $\alpha$-friendly, with computable existential diagram. Suppose that there is a tuple $\cbar$ in $\scrA$ over which no tuple $\abar$ is $\alpha$-free. Then $\scrA$ has a formally $\Sigma^0_\alpha$ Scott family, with parameters $\cbar$.
\end{proposition}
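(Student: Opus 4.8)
The plan is to read a Scott family directly off the standard back-and-forth relations, using $\alpha$-friendliness to control both the syntactic complexity and the enumeration, and using the absence of $\alpha$-free tuples over $\cbar$ to force the orbit property. The first ingredient is the standard consequence of $\alpha$-friendliness together with the computable existential diagram (see \cite{AshKnight00}): for every tuple $\bar u$ in $\mc A$ and every $\beta < \alpha$, the relation ``$\bar u \leq_\beta \bar x$'', viewed as a condition on $\bar x$, is defined by a $\Pi^{\comp}_\beta$ formula $\theta^\beta_{\bar u}(\bar x)$, and $\theta^\beta_{\bar u}$ may be found uniformly from $\bar u$ and $\beta$ (the existential diagram handles the ground level, and $\beta$-friendliness propagates computability of the canonical formulas up the hierarchy). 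I will also use that each $\leq_\beta$ is reflexive and that $\leq_{\beta'} \subseteq \leq_\beta$ when $\beta \leq \beta'$.

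For each tuple $\abar$ in $\mc A$, since $\abar$ is not $\alpha$-free over $\cbar$, fix a tuple $\abar_1$ and an ordinal $\beta_\abar < \alpha$ witnessing this, i.e.\ such that for all $\abar', \abar_1'$ of the appropriate lengths, $\cbar \abar \abar_1 \leq_{\beta_\abar} \cbar \abar' \abar_1'$ implies $\cbar \abar' \leq_\alpha \cbar \abar$. Set
$$\varphi_\abar(\cbar, \xbar) \ :=\ (\exists \ybar)\, \theta^{\beta_\abar}_{\cbar \abar \abar_1}(\cbar, \xbar, \ybar),$$
which is a $\Sigma^{\comp}_{\beta_\abar + 1}$ formula over the parameters $\cbar$; since $\beta_\abar + 1 \leq \alpha$ always (with equality only possible when $\alpha$ is a successor), $\varphi_\abar$ is formally $\Sigma^{\comp}_\alpha$. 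The candidate Scott family is $\Phi = \{ \varphi_\abar : \abar \in \mc A \}$.

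It then remains to check that $\Phi$ is a Scott family over $\cbar$. Covering is immediate: $\mc A \models \varphi_\abar(\cbar, \abar)$ by reflexivity of $\leq_{\beta_\abar}$, taking $\ybar = \abar_1$. For the orbit property, suppose $\mc A \models \varphi_\abar(\cbar, \bbar)$, witnessed by some $\bbar_1$; then $\cbar \abar \abar_1 \leq_{\beta_\abar} \cbar \bbar \bbar_1$, so by the choice of $(\abar_1, \beta_\abar)$ we get $\cbar \bbar \leq_\alpha \cbar \abar$. One must now show that $\cbar \bbar \leq_\alpha \cbar \abar$ already implies that $\abar$ and $\bbar$ lie in the same orbit under automorphisms of $\mc A$ fixing $\cbar$. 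I would prove this by the usual back-and-forth construction of an automorphism, building a chain of finite partial isomorphisms while maintaining the invariant that the two current tuples (each extending $\cbar$) are related by $\leq_\alpha$; the ground level of $\leq_\alpha$ already forces equality of atomic diagrams, and the extension step asks precisely for: given two tuples related at level $\alpha$ over $\cbar$ and a new element on one side, a matching element on the other side preserving the level-$\alpha$ relation. The only obstruction to this step is the existence of an $\alpha$-free tuple over $\cbar$, which is exactly what the hypothesis rules out. I expect this step to be the main difficulty: one has to unwind the definition of $\alpha$-free so that its negation licenses each back-and-forth move, and to handle limit $\alpha$, where the level-$\alpha$ match at the extension step must be chosen coherently across all levels $\beta < \alpha$.

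Finally, for effectiveness one wants $\Phi$ to be a c.e.\ family of $\Sigma^{\comp}_\alpha$ formulas, hence a formally $\Sigma^0_\alpha$ Scott family with parameters $\cbar$. The formulas $\theta^\beta_{\bar u}$ are produced uniformly by $\alpha$-friendliness, so the one remaining point is to enumerate, for each $\abar$, at least one genuine witness $(\abar_1, \beta_\abar)$ to the non-$\alpha$-freeness of $\abar$ over $\cbar$; since covering holds for any choice of $\abar_1, \beta_\abar < \alpha$ (again by reflexivity), it suffices to produce one valid witness per $\abar$, which one does by an effective search using that the relations $\leq_\beta$ are c.e.\ uniformly in $\beta < \alpha$.
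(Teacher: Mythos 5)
Your overall blueprint matches the Ash--Knight proof: for each $\abar$ pick a witness $(\abar_1,\beta_\abar)$ to non-$\alpha$-freeness, set $\varphi_\abar := (\exists\ybar)\,\theta^{\beta_\abar}_{\cbar\abar\abar_1}$, verify covering by reflexivity, and observe that the formula lands in $\Sigma^{\comp}_\alpha$. But the step you flag as ``the main difficulty'' --- that $\cbar\bbar \leq_\alpha \cbar\abar$ together with the no-free hypothesis yields an automorphism carrying $\cbar\abar$ to $\cbar\bbar$ --- is genuinely incomplete, and your framing of it (``the extension step asks precisely for \dots a matching element on the other side preserving the level-$\alpha$ relation; the only obstruction \dots is the existence of an $\alpha$-free tuple'') mislocates the difficulty. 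The relation $\leq_\alpha$ is a preorder, not symmetric, and the two halves of the back-and-forth are not on an equal footing. Suppose the current invariant is $\cbar\vbar \leq_\alpha \cbar\ubar$. To add a new element $d$ to the $\ubar$-side (the ``forth'' move), non-freeness of $\ubar d$ over $\cbar$ gives $(\wbar,\gamma)$, and $\cbar\vbar \leq_\alpha \cbar\ubar$ supplies $e,\wbar'$ with $\cbar\ubar d\wbar \leq_\gamma \cbar\vbar e\wbar'$, whence $\cbar\vbar e \leq_\alpha \cbar\ubar d$; this is fine. But to add $e$ to the $\vbar$-side (the ``back'' move), you need to produce $d$ with $\cbar\vbar e \leq_\alpha \cbar\ubar d$, and here non-freeness of $\vbar e$ gives you an implication pointing the wrong way (from extensions \emph{of} $\cbar\vbar e$ at some level $\beta$ to a $\leq_\alpha$ bound \emph{by} $\cbar\vbar e$), while $\cbar\vbar \leq_\alpha \cbar\ubar$ only lets you transport extensions of $\cbar\ubar$ down to $\cbar\vbar$, never the reverse. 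Running the forth move alone produces an injective partial-isomorphism-preserving map $\mc A \to \mc A$, but nothing in the sketch forces surjectivity.

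What is actually needed, and what Ash and Knight supply, is a separate lemma to the effect that under the hypothesis (no tuple $\alpha$-free over $\cbar$) the preorder $\leq_\alpha$ restricted to tuples extending $\cbar$ is \emph{symmetric}; once you know $\cbar\abar \leq_\alpha \cbar\bbar$ as well as $\cbar\bbar \leq_\alpha \cbar\abar$, both halves of the back-and-forth go through by the argument you gave for the forth half. One useful ingredient you do not mention is the elementary fact that $\bar a \leq_{\gamma+1} \bar b$ implies $\bar b \leq_\gamma \bar a$; combined with $\cbar\bbar \leq_\alpha \cbar\abar$ this yields $\cbar\abar \leq_\gamma \cbar\bbar$ for every $\gamma < \alpha$, and then applying non-freeness of $\bbar$ over $\cbar$ with witness $(\bbar_1,\gamma)$ for a $\gamma$ with $\gamma+1 < \alpha$ produces $\cbar\abar \leq_\alpha \cbar\bbar$. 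Even this leaves a boundary case unaddressed when $\alpha$ is a successor and the only available witness ordinal is $\gamma = \alpha - 1$, so the symmetry lemma is not ``the usual back-and-forth'' --- it is the real content of the proposition and needs to be proved, not gestured at. A secondary soft spot is your effectiveness remark at the end: deciding whether a candidate $(\abar_1,\beta)$ really witnesses non-$\alpha$-freeness is a $\forall(\text{c.e.} \to \text{c.e.})$ condition (and involves $\leq_\alpha$ itself, which $\alpha$-friendliness does not make c.e.), so ``an effective search using that the relations $\leq_\beta$ are c.e.'' does not straightforwardly enumerate \emph{valid} witnesses; this needs more care to justify the ``formally'' (c.e.) part of the conclusion.
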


\begin{corollary}\label{cor:not-cat gives freeness} Suppose that $\scrA$ is not $\Delta^0_\alpha$ categorical on any cone. Then for any $\cbar$ in $\scrA$, there is some $\abar \in \scrA$ that is $\alpha$-free over $\cbar$.
\end{corollary}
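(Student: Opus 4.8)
The plan is to prove the contrapositive, relativizing Proposition 17.6 to a cone. Suppose that for some tuple $\cbar$ in $\scrA$, no tuple $\abar \in \scrA$ is $\alpha$-free over $\cbar$; we want to show $\scrA$ is $\Delta^0_\alpha$ categorical on a cone. First I would choose a degree $\bf{d}$ large enough that: $\scrA$ is $\bf{d}$-computable, $\alpha$ is $\bf{d}$-computable, and $\scrA$ is $\alpha$-friendly relative to $\bf{d}$ with $\bf{d}$-computable existential diagram. The point is that $\alpha$-friendliness is not automatic, but it can always be achieved by passing to a cone: the standard back-and-forth relations $\leq_\beta$ for $\beta < \alpha$ are defined by an arithmetic (indeed hyperarithmetic) recursion on $\beta$ from the atomic diagram of $\scrA$, so some oracle $\bf{d}$ computes them uniformly in $\beta$, and hence in particular they are c.e.\ uniformly in $\beta$ relative to $\bf{d}$.

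Next I would observe that the properties ``$\abar$ is $\alpha$-free over $\cbar$'' and ``no $\abar$ is $\alpha$-free over $\cbar$'' are absolute: they are statements about the abstract structure $\scrA$ and the ordinal $\alpha$, defined purely in terms of the back-and-forth relations $\leq_\beta$, which do not depend on the presentation or the oracle. So the hypothesis ``no $\abar$ is $\alpha$-free over $\cbar$'' continues to hold relative to $\bf{d}$ and relative to every $\bf{c} \geq \bf{d}$. Now I can apply the relativization of Proposition 17.6: for every $\bf{c} \geq \bf{d}$ and every $\bf{c}$-computable copy $\scrA'$ of $\scrA$ (which is again $\alpha$-friendly with computable existential diagram relative to $\bf{c}$, since $\bf{c} \geq \bf{d}$), the structure $\scrA'$ has a formally $\Sigma^0_\alpha(\bf{c})$ Scott family with parameters the image of $\cbar$. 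By the relativization of Theorem \ref{thm:catandScott} (Ash's theorem characterizing relative $\Delta^0_\alpha$ categoricity via a c.e.\ Scott family of $\Sigma^{\comp}_\alpha$ formulas), it follows that $\scrA'$ is relatively $\Delta^0_\alpha$ categorical relative to $\bf{c}$, and in particular any two $\bf{c}$-computable copies of $\scrA$ have a $\Delta^0_\alpha(\bf{c})$ isomorphism between them. Thus $\scrA$ is $\Delta^0_\alpha$ categorical on the cone above $\bf{d}$, contradicting the hypothesis of the corollary.

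The main obstacle, such as it is, is the bookkeeping needed to guarantee the effectiveness conditions of Proposition 17.6 hold on a cone --- namely that $\alpha$-friendliness and computability of the existential diagram can be absorbed into the base of the cone. This is routine once one notes that the back-and-forth relations are obtained by a transfinite recursion of length $\alpha$ that is arithmetic in each step, so a single oracle computing a presentation of $\alpha$ together with the whole sequence $(\leq_\beta)_{\beta < \alpha}$ suffices; one should also remember to increase the oracle so that it computes $\cbar$ as an actual finite tuple and so that the existential diagram of the (relativized) copy is decidable, but all of this is standard. The only genuinely conceptual point to state carefully is the absoluteness of $\alpha$-freeness, which is what allows the combinatorial hypothesis to transfer unchanged from $\scrA$ to each copy on the cone.
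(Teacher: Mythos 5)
Your overall approach is the intended one (the paper treats this as an immediate corollary of Proposition 17.6, via the contrapositive), but one step as written is incorrect and needs repair. You claim that for every $\mathbf{c} \geq \mathbf{d}$, every $\mathbf{c}$-computable copy $\scrA'$ of $\scrA$ is ``again $\alpha$-friendly with computable existential diagram relative to $\mathbf{c}$, since $\mathbf{c} \geq \mathbf{d}$.'' This does not follow. $\alpha$-friendliness is a property of a \emph{presentation}, not of an isomorphism type: it asserts that the back-and-forth relations on that particular copy are c.e.\ (relative to the oracle), and an arbitrary $\mathbf{c}$-computable copy $\scrA'$ need not have this property even relative to $\mathbf{c}$, regardless of how carefully $\mathbf{d}$ was chosen. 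Choosing $\mathbf{d}$ large only guarantees that the single fixed presentation of $\scrA$ you started with is $\alpha$-friendly relative to $\mathbf{d}$ (and hence relative to any $\mathbf{c} \geq \mathbf{d}$); it says nothing about other $\mathbf{c}$-computable presentations. Thus you cannot apply Proposition 17.6 separately to each $\scrA'$.

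The fix is to apply Proposition 17.6 exactly once, to the fixed $\mathbf{d}$-computable, $\alpha$-friendly-relative-to-$\mathbf{d}$ presentation $\scrA$. This produces a Scott family $\Phi$ with parameters $\cbar$ consisting of $\Sigma^{\comp}_\alpha$ formulas and c.e.\ relative to $\mathbf{d}$. Now a Scott family is a set of formulas together with a property of the abstract structure, so $\Phi$ is a Scott family for every isomorphic copy of $\scrA$, not just the one presentation. Applying the relativization of Theorem \ref{thm:catandScott} once (to $\scrA$, relative to $\mathbf{d}$) gives that $\scrA$ is relatively $\Delta^0_\alpha$ categorical relative to any $\mathbf{c} \geq \mathbf{d}$: for every copy $\scrB$, some isomorphism $\scrA \to \scrB$ is $\Delta^0_\alpha(\scrB \oplus \mathbf{c})$. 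Then for $\mathbf{c}$-computable copies $\scrB$ and $\scrC$, compose the resulting $\Delta^0_\alpha(\mathbf{c})$-computable isomorphisms through $\scrA$ to get a $\Delta^0_\alpha(\mathbf{c})$ isomorphism $\scrB \to \scrC$. This gives $\Delta^0_\alpha$ categoricity on the cone above $\mathbf{d}$, which is the desired contradiction. The rest of your argument --- choosing $\mathbf{d}$ to absorb $\alpha$-friendliness, the absoluteness of $\alpha$-freeness --- is correct and is exactly the intended reading.
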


\begin{thm}\label{thm:easy-version} Let $\scrA$ be a structure. If $\scrA$ is not computably categorical on any cone, then there exists an $\textup{\be}$ such that for all $\textup{\bd} \geq \textup{\be}$, if $\textup{\bc}$ is \textup{\ce} in and above \textup{\bd}, then there exists a \textup{\bd}-computable copy $\scrB$ of $\scrA$ such that
\begin{enumerate}
	\item there is a \textup{\bc}-computable isomorphism between $\scrA$ and $\scrB$ and
	\item for every isomorphism $f$ between $\scrA$ and $\scrB$, $f \oplus \textup{\bd}$ computes $\textup{\bc}$.
\end{enumerate}
\end{thm}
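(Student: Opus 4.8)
The plan is to prove this by building a copy $\scrB$ of $\scrA$ whose particular presentation encodes the set $\bc$, arranged so that an arbitrary isomorphism together with $\textbf{d}$ can decode $\bc$, while a presentation of $\bc$ itself already suffices to build an isomorphism. The key structural input is Corollary~\ref{cor:not-cat gives freeness} with $\alpha=1$: since $\scrA$ is not computably categorical on any cone, over every tuple $\cbar$ of $\scrA$ there is a tuple that is $1$-free over $\cbar$. I would first fix the cone: let $\be$ be a degree large enough to contain the (countably much) data about $\scrA$ that the construction and the decoding need, namely $\be$ computes $\scrA$; $\be$ computes the back-and-forth relations $\leq_0$ and $\leq_1$ of $\scrA$ (the latter is $\Pi^0_2$ in $\scrA$, so some $\be$ works); and for every tuple $\cbar$, $\be$ computes a choice of tuple $\abar$ that is $1$-free over $\cbar$ together with the witnessing tuples $\abar',\abar_1',\bar d$ from the definition of $1$-freeness for every finite $\abar_1$. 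Using this, fix (computably in $\be$) a sequence of \emph{coding tuples}: $\abar_0$ $1$-free over $\es$, and $\abar_{n+1}$ $1$-free over $\cbar_{n+1}:=\abar_0\cdots\abar_n$, chosen so that $\scrA=\bigcup_n\cbar_n$. Because we are in the case $\alpha=1$, no jumps of $\scrA$ enter and the ``$\alpha$-friendliness'' hypotheses are essentially vacuous; this is what makes this case substantially simpler than the general one.

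Now fix $\textbf{d}\geq\be$ and an approximation $\bc=\bigcup_s\bc_s$ computably enumerable relative to $\textbf{d}$ (possible since $\bc$ is $\Sigma^0_1(\textbf{d})$). I build $\scrB=\bigcup_s\scrB_s$ and a partial isomorphism $h_s\colon\scrA\to\scrB_s$ by a $\textbf{d}$-computable stagewise construction. At ``building'' stages I extend $h$ by mapping the next element of $\scrA$, and in particular the next coding tuple $\abar_n$, to fresh elements of $\scrB$ (so $\abar_n$ gets a designated \emph{slot} $\bbar_n$ in $\scrB$), committing the quantifier-free diagram of $\scrB$ to maintain an isomorphism, but slowly enough that on the committed part the finite-Gödel-bound relation $\leq_0$ captures full quantifier-free type — a standard device. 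At a ``coding'' stage, if some $n$ has just entered $\bc$ and has not yet been acted on, I perform a \emph{twist} at $n$: letting $\abar_1$ be the part of $\dom h_s$ mapped in after $\abar_n$, I invoke $1$-freeness of the current version of $\abar_n$ over its current context to obtain $\abar',\abar_1',\bar d$, and re-declare the elements of $\scrB$ that were $h_s(\abar_n\,\abar_1)$ to be instead $h_{s+1}(\abar'\,\abar_1')$, recording $\bar d$. Each $n$ is acted on at most once, so in the limit $h$ is a genuine isomorphism $\scrA\to\scrB$, and $\scrB$ is $\textbf{d}$-computable since we only ever add to it.

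For (1): $\bc\geq_T\textbf{d}$ knows exactly which $n$ enter and when, so it can run the construction performing precisely the twists that occur and thereby compute $h$, the required $\bc$-computable isomorphism. For (2): given any isomorphism $f\colon\scrA\to\scrB$ and oracle $\textbf{d}$, I decode $\bc$ by induction on $n$. Suppose $\bc\cap[0,n)$ has been decoded. Using it, $f\oplus\textbf{d}$ reconstructs the finite part of the construction relevant to location $n$ — run the $\textbf{d}$-enumeration until $\bc\cap[0,n)$ has appeared (noting the stages), then run the $\textbf{d}$-computable construction performing exactly the twists for locations below $n$ — and so computes the final physical slot $\bbar_n$ (which is affected only by twists at locations $<n$) and the final version $\abar_n^{\ast}$ the coding tuple at location $n$ would have if $n\notin\bc$. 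The decoding then rests on the equivalence
\[ n\notin\bc \iff f^{-1}(\bbar_n)\leq_1\abar_n^{\ast}, \]
whose right-hand side is $(f\oplus\textbf{d})$-computable since $\be$ computes $\leq_1$ on $\scrA$ and $f$ computes $f^{-1}$. Indeed, if $n\notin\bc$ then location $n$ is never twisted, so $f^{-1}(\bbar_n)=(f^{-1}\circ h)(\abar_n^{\ast})$ is an automorphic image of $\abar_n^{\ast}$ and hence $\leq_1$-equivalent to it; if $n\in\bc$ then $\bbar_n$ holds (an image of) a tuple chosen via $1$-freeness to be $\nleq_1$ the then-current version of the coding tuple, so $f^{-1}(\bbar_n)\nleq_1\abar_n^{\ast}$, using that the back-and-forth relations depend only on the $\equiv_\infty$-class of each argument.

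I expect the main obstacle to be the bookkeeping around twists when there are infinitely many coding requirements: a twist at location $m$ re-routes everything mapped after $\abar_m$, which includes the coding tuples $\abar_{m+1},\abar_{m+2},\dots$, so one must verify that the re-routed tuples remain $1$-free over their new contexts, that $\scrB$ remains a faithful copy of $\scrA$, and that the decoding still identifies the correct slot and comparison tuple. This is exactly the limit/injury bookkeeping that the $\eta$-system machinery of Sections~\ref{sec:metatheorem}–\ref{sec:mainthm} will automate in general; in the present $\alpha=1$ case it can be done by hand, by ordering the coding tuples so that each is disturbed only by twists below it and applying Corollary~\ref{cor:not-cat gives freeness} repeatedly to preserve freeness under re-routing. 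The remaining technicality — the one-directional, finite-Gödel-bound nature of $\leq_0$ — is handled by the standard device of committing the diagram of $\scrB$ slowly enough, so it should not cause real trouble.
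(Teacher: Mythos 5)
Your proposal is the paper's proof: build $\scrB$ via a \bd-computable stagewise bijection with coding slots whose preimages are chosen $1$-free over their prefixes, twist via $1$-freeness when $n$ enters \bc (staying $\leq_0$-compatible so the committed diagram of $\scrB$ is preserved), undefine and re-choose fresh $1$-free tuples at the injured locations $>m$ when $m$ is enumerated, and decode by replaying the construction with $\bc\cap[0,n)$ already in hand. The one cosmetic difference is your decoding test via $\leq_1$, where the paper instead arranges for \be\ to compute a Scott family in which automorphic equivalence of tuples is decidable and tests $(\scrA,g(\bbar_n))\cong(\scrA,\abar_n)$; both tests decide correctly, since the twist produces $\nleq_1$ (hence $\ncong$) while the untwisted case gives an automorphism (hence $\leq_1$).
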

\begin{proof} Suppose $\scrA$ is not computably categorical on any cone. Before we begin, note that since $\scrA$ is not computably categorical on any cone, for any tuple $\cbar$ in $\scrA$, there exist a tuple $\abar$ in $\scrA$ that is $1$-free over $\cbar$. Let $\be$ be such that:
\begin{enumerate}
\item $\scrA$ is $\be$-computable,
	\item $\be$ computes a Scott family for $\scrA$ where each tuple satisfies a unique formula, and $\be$ can compute which which formula a tuple of $\scrA$ satisfies,
	\item $\scrA$ is 1-friendly relative to \be, and
	\item given $\cbar$, $\be$ can compute the least tuple $\abar$ that is $1$-free over $\cbar$.
\end{enumerate}
Let $\bd \geq \be$, and let $\bc$ be \ce in and above \bd. Let $C \in \bc$ be such that we have a $\bd$-computable approximation to $C$ where at most one number is enumerated at each stage, and there are infinitely many stages when nothing is enumerated.

We will build $\mc{B}$ with domain $\omega$ by a \bd-computable construction. We will build a bijection $f \colon \omega \to \scrA$ and $\scrB$ will be the pullback, along $f$, of $\scrA$. At each stage $s$, we will have a finite approximation $f_s$ to $f$, and $\scrB[s]$ a finite part of the diagram of $\mc{B}$ so that $f_s$ is a partial isomorphism between $\mc{B}[s]$ and $\mc{A}$. Once we put something into the diagram of $\mc{B}$, we will not remove it, and so $\mc{B}$ will be \bd-computable. While the approximation $f_s$ will be \bd-computable, $f$ will be $C$-computable.

We will have distinguished tuples $\abar_0 \in \scrA$ and $\bbar_0 \in \scrB$, such that for any isomorphism $g: \scrB \rt \scrA$, we will have $0 \nin C$ if and only if $g(\bbar_0)$ is automorphic to $\abar_0$ in $\scrA$. For $n > 0$ the strategy for coding whether $n \in C$ will be the same, but our $\abar_n$ and $\bbar_n$ will be re-defined each time some $m < n$ is enumerated into $C$. When $n$ is enumerated into $C$, we will be able to redefine $f$ on $\bbar_n$ and on all greater values. At each stage $s$, we have current approximations $\abar_n[s]$ and $\bbar_n[s]$ to these values. The tuple $\bbar_n[s]$ will be a series of consecutive elements of $\omega$; by $\scrB \Eres \bbar$ we mean the elements of $\scrB$ up to, and including, those of $\bbar$, and by $\scrB \res \bbar$ we mean those up to, but not including, $\bbar$.

At each stage, if $n \notin C$, for those $\abar_n$ and $\bbar_n$ which are defined at that stage we will have $f(\bbar_n)$ is $1$-free over $f(\scrB \res \bbar_n)$; otherwise, we will have $f(\scrB \res \bbar_n) f(\bbar_n) \niso f(\scrB \res \bbar_n) \abar_n$.

\vspace{5pt}

\noindent \textit{Construction.}

\vspace{5pt}

\textit{Stage 0}: Let $\abar_0[0]$ be the least tuple of $\scrA$ that is $1$-free, and let $\bbar_0[0]$ be the first $|\abar_0|$-many elements of $\omega$. Define $f_0$ to be the map $\bbar_0[0] \mapsto \abar_0[0]$. Let $\scrB[0]$ be the pullback, along $f_0$, of $\scrA$, using only the first $|\abar_0[0]|$-many symbols from the language.

\textit{Stage $s+1$}: Suppose $n$ enters $C$ at stage $s+1$. Let $\bbar = \scrB[s]\res\bbar_{n}[s]$. Let $\bbar'$ be those elements of $\scrB[s]$ which are not in $\bbar$ or $\bbar_n[s]$. Then, since $\abar_n[s]$ is $1$-free over $f(\bbar)$, there are $\abar,\abar' \in \scrA$ such that
\[ f(\bbar),\abar_n[s],f(\bbar') \leq_0 f(\bbar),\abar,\abar', \text{ but } f(\bbar), \abar \not\iso f(\bbar), \abar_n[s].\]
Define $f_{s+1}$ to map $\bbar,\bbar_n[s],\bbar'$ to $f(\bbar),\abar,\abar'$. For $m \leq n$, let $\abar_m[s+1] = \abar_m[s]$ and $\bbar_m[s+1] = \bbar_m[s]$. For $m > n$, $\abar_m[s+1]$ and $\bbar_m[s+1]$ are undefined.

If nothing enters $C$ at stage $s+1$, let $n$ be least such that $\abar_n[s]$ is undefined. For $m < n$, let $\abar_m[s+1] = \abar_m[s]$ and $\bbar_m[s+1] = \bbar_m[s]$. Let $\abar_{n}[s+1]$ be the least tuple that is $1$-free over $\ran(f_{s})$. Extend $f_s$ to $f_{s+1}$ with range $\scrA\Eres\abar_{n}[s+1]$ by first mapping new elements $\bbar_{n}[s+1]$ of $\omega$ to $\abar_{n}[s+1]$, and then mapping more elements to the rest of $\scrA\Eres\abar_{n}[s+1]$. If $n \in C$, we must modify $f_{s+1}$ as described above in the case $n$ entered $C$.

In all cases, let $\mc{B}[s+1]$ be the pullback, along $f_{s+1}$, of $\mc{A}$. We have $\mc{B}[s] \subseteq \mc{B}[s+1]$.

\vspace{5pt}

\noindent \textit{End of construction.}

\vspace{5pt}

Since $\abar_n$ and $\bbar_n$ are only re-defined when there is an enumeration of some $m \leq n$ into $C$, it is easy to see that for each $n$, $\abar_n$ and $\bbar_n$ eventually reach a limit. Moreover, since the $\abar_n$ and $\bbar_n$ form infinite sequences in $\scrA$ and $\scrB$, respectively, and since $f$ is not re-defined on $\scrB\Eres \bbar_n$ unless there is an enumeration of $m \leq n$ into $C$, we see that $f$ is an isomorphism between $\scrB$ and $\scrA$. Moreover, $C$ can compute a stage when $\abar_n$ and $\bbar_n$ have reached their limit, and hence $f$ is \bc-computable.

Now suppose $g: \scrB \rt \scrA$ is an isomorphism. To compute $C$ from $g \oplus \bd$, proceed as follows. Compute $g(\bbar_0)$. Ask $\bd$ whether $(\scrA, g(\bbar_0)) \iso (\scrA, \abar_0)$. If yes, then $0 \nin C$. We also know that $\bbar_1 = \bbar_1[0]$ and that $\abar_1 = \abar_1 [0]$. If $(\scrA, g(\bbar_0)) \niso (\scrA, \abar_0)$, then $0 \in C$. Compute $s$ such that $0 \in C[s]$. Then $\bbar_1 = \bbar_1[s]$ and $\abar_1 = \abar_1[s]$. Continuing in this way, given $\bbar_{n}$ and $\abar_{n}$, we ask $\bd$ whether $(\scrA, g(\bbar_{n})) \iso (\scrA, \abar_{n})$, using the answer to decide whether $n \in C$ and to compute $\bbar_{n+1}$ and $\abar_{n+1}$.
\end{proof}

Using Knight's theorem on the upwards closure of degree spectra \cite{Knight86}, we get a slight strengthening of the above theorem.

\begin{cor} Let $\scrA$ be a computable structure. If $\scrA$ is not computably categorical on any cone, then there exists an $\textup{\be}$ such that for all $\textup{\bd} \geq \textup{\be}$, if $\textup{\bc}$ is \textup{\ce} in and above \textup{\bd}, then there exists a \textup{\bd}-computable copy $\scrB$ of $\scrA$ such that every isomorphism between $\scrA$ and $\scrB$ computes \textup{\bc}, and such that there exists a \textup{\bc}-computable isomorphism between $\scrA$ and $\scrB$.
\end{cor}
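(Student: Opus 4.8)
The plan is to bootstrap Theorem~\ref{thm:easy-version} using Knight's theorem on the upward closure of degree spectra \cite{Knight86}. Since $\scrA$ is not computably categorical on any cone, it is not automorphically trivial (an automorphically trivial structure is computably categorical on a cone), so Knight's theorem applies to $\scrA$. Moreover I would use it in the following form: given a $\bd$-computable copy $\scrB_0$ of $\scrA$, there is a copy $\scrB$ of $\scrA$ with $\scrB \equiv_T \bd$ \emph{together with} a $\bd$-computable isomorphism $j \colon \scrB_0 \to \scrB$. This uniformity is present in Knight's construction, which builds the new copy and the isomorphism to the given copy simultaneously and computably in the target oracle.

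First I would take $\be$ to be the degree produced in the proof of Theorem~\ref{thm:easy-version}, fix $\bd \geq \be$ and $\bc$ that is \ce in and above $\bd$, and let $\scrB_0$ be the $\bd$-computable copy of $\scrA$ that Theorem~\ref{thm:easy-version} provides: there is a $\bc$-computable isomorphism $\scrA \to \scrB_0$, and every isomorphism $f \colon \scrA \to \scrB_0$ satisfies $f \oplus \bd \geq_T \bc$. Applying Knight's theorem relative to $\bd$, I obtain $\scrB \cong \scrA$ with $\scrB \equiv_T \bd$ and a $\bd$-computable isomorphism $j \colon \scrB_0 \to \scrB$ (so $j^{-1}$ is $\bd$-computable too). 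I claim this $\scrB$ is as required.

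To verify this, note first that $\scrB$ is $\bd$-computable since $\scrB \equiv_T \bd$. Composing the $\bc$-computable isomorphism $\scrA \to \scrB_0$ with $j$ (which is $\bc$-computable, as $\bd \leq_T \bc$) gives a $\bc$-computable isomorphism $\scrA \to \scrB$. And if $g \colon \scrA \to \scrB$ is \emph{any} isomorphism, then since $\scrA$ is computable the atomic diagram of $\scrB = g[\scrA]$ is computable from $g$, so $g \geq_T \scrB \equiv_T \bd$; hence $j^{-1}\circ g \colon \scrA \to \scrB_0$ is an isomorphism with $j^{-1}\circ g \leq_T g$, and by Theorem~\ref{thm:easy-version}, $\bc \leq_T (j^{-1}\circ g)\oplus \bd \leq_T g$. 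Thus every isomorphism between $\scrA$ and $\scrB$ computes $\bc$.

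The one delicate point is the exact strength of Knight's theorem invoked: I need the witnessing copy $\scrB$ to come with a $\bd$-computable isomorphism to the already-built copy $\scrB_0$, since $\scrA$ need not be $\bd$-computably categorical and otherwise the $\bc$-computable isomorphism from Theorem~\ref{thm:easy-version} would not transfer to $\scrB$. An alternative, should one prefer not to quote Knight's theorem in that strength, is to merge the two constructions: use the non-triviality of $\scrA$ to code $\bd$ into the atomic diagram of $\scrB$ at the ``free'' stages of the construction in Theorem~\ref{thm:easy-version} (where the partial isomorphism is extended to exhaust more of $\scrA$), and check that this coding does not disturb the $1$-freeness of the blocks $\abar_n,\bbar_n$; this is routine.
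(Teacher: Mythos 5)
Your proof is correct and takes essentially the same route as the paper's: apply Theorem~\ref{thm:easy-version} to get a $\bd$-computable copy, then use Knight's upward closure theorem (with the $\bd$-computable isomorphism that the construction provides to the original copy) to replace it with a copy of degree exactly $\bd$, and transfer both conditions across that isomorphism. Your write-up is, if anything, a bit more careful than the paper's about the directions of the composed isomorphisms and about noting where non-automorphic-triviality is used.
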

\begin{proof} Take $\be$ as guaranteed by the theorem, and fix $\bd \geq \be$, and let $\bc$ be \ce in $\bd$. Let $\scrC$ be as guaranteed by the theorem. Since $\scrC$ is \bd-computable, by the proof of Knight's upward closure theorem \cite{Knight86} (and noting that a ``trivial'' structure is computably categorical on a cone), there exists $\scrB$ such that deg$(\scrB)=\bd$ and such that there exists a $\bd$-computable isomorphism $h: \scrB \iso \scrC$. Now since $\scrA$ is computable and deg$(\scrB)=\bd$, any isomorphism $g: \scrA \iso \scrB$ computes $\bd$. Since $\bd$ computes $h$, $g$ computes the isomorphism $g \circ h :\scrC \iso \scrA$ and hence it computes $\bc$. On the other hand, since $\bc$ computes $\bd$ and hence $h$, and since $\bc$ computes an isomorphism between $\scrA$ and $\scrC$, we have that $\bc$ computes an isomorphism between $\scrA$ and $\scrB$.
\end{proof}

\begin{cor}
On a cone, a structure cannot have degree of categoricity strictly between ${\bf 0}$ and ${\bf 0'}$. That is, if $\scrA$ is not computably categorical on any cone, and if $\scrA$ has a degree of categoricity on a cone, then there is some $\textup{\be}$ such that for all $\textup{\bd} \geq \textup{\be}$, the degree of categoricity of $\scrA$ relative to $\textup{\bd}$ is at least $\textbf{\textup{d}}'$.
\end{cor}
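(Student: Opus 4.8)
The plan is to apply Theorem~\ref{thm:easy-version} with the particular choice $\bc = \bd'$. Since $\bd'$ is always \ce in and above $\bd$, the theorem manufactures, on a cone, a $\bd$-computable copy of $\scrA$ every isomorphism of which, taken together with $\bd$, computes $\bd'$; because the degree of categoricity of $\scrA$ relative to $\bd$ must in particular compute an isomorphism between this copy and a $\bd$-computable presentation of $\scrA$, it is forced to lie above $\bd'$.

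In detail, I would first assemble the base $\be$ of the cone. Let $\be_1$ be the degree provided by Theorem~\ref{thm:easy-version} for $\scrA$ (available because $\scrA$ is not computably categorical on any cone); recall that $\be_1$ in particular computes a presentation of $\scrA$. Using the hypothesis that $\scrA$ has a degree of categoricity on a cone, let $\be_2$ be a degree such that $\scrA$ has a degree of categoricity relative to every $\bc \geq \be_2$, and set $\be = \be_1 \oplus \be_2$. Now fix any $\bd \geq \be$ and let $\textbf{r}$ be the degree of categoricity of $\scrA$ relative to $\bd$, which exists since $\bd \geq \be_2$; by definition $\textbf{r} \geq \bd$ and $\textbf{r}$ computes an isomorphism between any two $\bd$-computable copies of $\scrA$. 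Since $\bd \geq \be_1$, Theorem~\ref{thm:easy-version} applied with $\bc = \bd'$ yields a $\bd$-computable copy $\scrB$ of $\scrA$ such that, for every isomorphism $f \colon \scrA \to \scrB$, the join $f \oplus \bd$ computes $\bd'$. As $\scrA$ itself has a $\bd$-computable presentation (again because $\bd \geq \be_1$), this presentation and $\scrB$ are two $\bd$-computable copies of $\scrA$, so $\textbf{r}$ computes some isomorphism $f \colon \scrA \to \scrB$; since also $\textbf{r} \geq \bd$, it follows that $\textbf{r}$ computes $f \oplus \bd$, hence $\textbf{r}$ computes $\bd'$, that is, $\textbf{r} \geq \bd'$. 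This is the desired conclusion, with base $\be$.

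I do not expect any genuine obstacle here: all the content lies in Theorem~\ref{thm:easy-version}, and what remains is bookkeeping, namely combining the two base cones into a single $\be$ and observing that the degree of categoricity relative to $\bd$ must reconcile a $\bd$-computable presentation of $\scrA$ with the pathological copy $\scrB$. One could instead route the argument through the preceding corollary, where \emph{every} isomorphism between $\scrA$ and $\scrB$ computes $\bd'$ outright, which would avoid even the appeal to $\textbf{r} \geq \bd$; but that corollary as stated requires $\scrA$ to be computable and so would first have to be relativized to $\bd$, which is slightly less clean.
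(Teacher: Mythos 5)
Your proof is correct and is precisely the argument the corollary is asking for; the paper states it without proof as an immediate consequence of Theorem~\ref{thm:easy-version}. Your instantiation $\bc = \bd'$ (noting that $\bd'$ is c.e.\ in and above $\bd$), the join of the two base cones into $\be = \be_1 \oplus \be_2$, and the observation that the degree of categoricity $\textbf{r}$ relative to $\bd$ must compute an isomorphism between the $\bd$-computable $\scrA$ and the pathological $\scrB$, hence $\textbf{r} \geq_T f \oplus \bd \geq_T \bd'$, is exactly the intended reasoning; your side remark about routing through the strengthened corollary is also fine.
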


\begin{cor}
If $\scrA$ is $\Delta^0_2$ categorical on a cone then $\scrA$ has degree of categoricity ${\bf 0}$ or ${\bf 0'}$ on a cone.
\end{cor}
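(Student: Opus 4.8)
The plan is to argue by cases on whether $\scrA$ is computably categorical on a cone. If it is, then $\scrA$ already has degree of categoricity ${\bf 0}$ on a cone: above the base of that cone, any two $\bc$-computable copies of $\scrA$ have a $\bc$-computable isomorphism, so the degree of categoricity of $\scrA$ relative to $\bc$ is $\bc=\bc^{(0)}$, which is precisely what ``degree of categoricity ${\bf 0}$ on a cone'' means. So the substance lies in the other case, where $\scrA$ is not computably categorical on any cone, and the goal becomes to show $\scrA$ has degree of categoricity ${\bf 0'}$ on a cone.

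In that case I would first fix a single degree $\bc_0$ that lies above all of the following finitely many conditions: a degree relative to which $\scrA$ is computable; the base of a cone on which $\scrA$ is $\Delta^0_2$ categorical (which exists by hypothesis); and the degree $\be$ produced by Theorem~\ref{thm:easy-version} applied to $\scrA$. Then, for an arbitrary $\bc\ge\bc_0$, I would show the degree of categoricity of $\scrA$ relative to $\bc$ is exactly $\bc'=\bc^{(1)}$. For the \emph{upper bound}: since $\bc$ is above the base of the $\Delta^0_2$-categoricity cone, any two $\bc$-computable copies of $\scrA$ admit a $\Delta^0_2(\bc)$-computable isomorphism, and every $\Delta^0_2(\bc)$ set is computable from $\bc'$; hence $\bc'\ge\bc$ computes an isomorphism between any two $\bc$-computable copies of $\scrA$. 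For \emph{minimality}: since $\bc\ge\be$ and the jump $\bc'$ is \ce in and above $\bc$, Theorem~\ref{thm:easy-version} yields a $\bc$-computable copy $\scrB$ of $\scrA$ such that every isomorphism $f$ between $\scrA$ and $\scrB$ satisfies $f\oplus\bc\ge_T\bc'$. Now if $\bd\ge\bc$ is any degree that computes an isomorphism between every pair of $\bc$-computable copies of $\scrA$, then in particular (as $\scrA$ and $\scrB$ are both $\bc$-computable copies) $\bd$ computes some isomorphism $f\colon\scrA\to\scrB$, whence $\bd\ge_T f\oplus\bc\ge_T\bc'$. Combining the two bounds, $\bc'$ is the least degree $\ge\bc$ computing isomorphisms between all $\bc$-computable copies of $\scrA$, i.e.\ the degree of categoricity of $\scrA$ relative to $\bc$; and taking $\scrA$ together with $\scrB$ as witnesses shows this is in fact a strong degree of categoricity relative to $\bc$. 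Since $\bc\ge\bc_0$ was arbitrary, $\scrA$ has (strong) degree of categoricity ${\bf 0'}$ on the cone above $\bc_0$.

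I do not expect a genuine obstacle here: the real work is already contained in Theorem~\ref{thm:ce-degree} (which, specialized to $\alpha=2$, says the least degree of an isomorphism to a given copy is \ce over that copy, hence $\le\bc'$) and in Theorem~\ref{thm:easy-version} (which exhibits a copy realizing the full jump). The only points requiring care are bookkeeping ones: merging the finitely many cone bases into one $\bc_0$ so that the conclusion is genuinely uniform on a cone, keeping the role-swap in the application of Theorem~\ref{thm:easy-version} straight (its cone parameter becomes $\bc$, its \ce degree becomes $\bc'$), and noting that the existence of a \emph{least} such degree—not obvious a priori—holds precisely because the $\Delta^0_2$ upper bound $\bc'$ is attained by the copy that Theorem~\ref{thm:easy-version} provides.
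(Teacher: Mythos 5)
The paper states this corollary without proof, as an immediate consequence of the preceding results, and your argument is precisely the intended one: split on whether $\scrA$ is computably categorical on a cone; if not, get the lower bound $\bc'$ from Theorem~\ref{thm:easy-version} (with its parameter $\bd$ set to $\bc$ and its c.e.\ degree set to $\bc'$) and the matching upper bound from $\Delta^0_2$-categoricity on a cone together with $\Delta^0_2(\bc)\leq_T\bc'$. Your bookkeeping (merging the cone bases, verifying $\scrA$ and $\scrB$ are both $\bc$-computable, noting the strong degree of categoricity) is all correct; the only minor quibble is that the closing remark crediting Theorem~\ref{thm:ce-degree} is unnecessary, since your upper bound uses only the Limit Lemma and not the sharper c.e.\ bound.
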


%

\section{A Version of Ash's Metatheorem} \label{sec:metatheorem}

The goal of the remainder of the paper is to prove Theorem \ref{thm:main1}. Our main tool will be a version of Ash's metatheorem for priority constructions which was first introduced in \cite{Ash86a,Ash86b,Ash90}. Ash and Knight's book \cite{AshKnight00} is a good reference. Montalb\'an \cite{Montalban} has recently developed a variant of Ash's metatheorem using computable approximations. Montalb\'an's formulation of the metatheorem also provides more control over the construction; for the proof of Theorem \ref{thm:main1}, we will require this extra control. However, Montalb\'an's version of the metatheorem, as written, only covers ${\bf 0}^{(\eta)}$-priority constructions for $\eta$ a successor ordinal. In this section, we will introduce the metatheorem and expand it to include the case of limit ordinals.

Fix a computable ordinal $\eta$ for which we will define $\eta$-systems and the metatheorem for constructions guessing at a $\Delta^0_\eta$-complete function. Here our notation differs from Montalb\'an's but corresponds to Ash's original notation. What we call an $\eta$-system corresponds to what Ash would have called an $\eta$-system, but what Montalb\'an calls an $\eta$-system we will call an $\eta+1$-system. This will allow us to consider, for limit ordinals $\eta$, what Montalb\'an might have called a $< \eta$-system.

\subsection{\texorpdfstring{Some $\Delta^0_\xi$-complete functions, their approximations, and true stages}{Some complete functions, their approximations, and true stages}}

Before defining an $\eta$-system and stating the metatheorem, we discuss some $\Delta^0_\xi$-complete functions and their approximations as introduced by Montalb\'an \cite{Montalban}. We will introduce orderings on $\omega$ to keep track of our beliefs on the correctness of the approximations.

For each computable ordinal $\xi \leq \eta$, Montalb\'an defines a $\Delta^0_\xi$-complete function $\nabla^{\xi} \in \omega^\omega$, and for each stage $s \in \omega$ a computable approximation $\nabla^{\xi}_s$ to $\nabla^{\xi}$. $\nabla^{\xi}_s$ is a finite string which guesses at an initial segment of $\nabla^{\xi}$. The approximations are all uniformly computable in both $s$ and $\xi$. Montalb\'an shows that the approximation has the following properties (see Lemmas 7.3, 7.4, and 7.5 of \cite{Montalban}):
\begin{enumerate}
	\item[(N1)] For every $\xi$, the sequence of stages $t_0 < t_1 < t_2 < \cdots$ for which $\nabla^\xi_t$ is correct is an infinite sequence with $\nabla^\xi_{t_0} \subseteq \nabla^\xi_{t_1} \subseteq \cdots$ and $\bigcup_{i \in \omega} \nabla_{t_i}^\xi = \nabla^\xi$.
	\item[(N2)] For each stage $s$, there are only finitely many $\xi$ with $\nabla^{\xi}_s \neq \la \ra$, and these $\xi$s can be computed uniformly in $s$.
	\item[(N3)] If $\gamma \leq \xi$, $s\leq t$, and $\langle \rangle \neq \nabla_s^{\xi} \subseteq \nabla^{\xi}_t$, then $\nabla_s^\gamma \subseteq \nabla_t^\gamma$.
\end{enumerate}
We say that $s$ is a \emph{true stage} or \emph{$\eta$-true stage} if $\nabla^\eta_s \subset \nabla^\eta$.

Montalb\'an defines relations $(\leq_\xi)_{\xi < \eta}$ on $\omega$, to be thought of as a relation on stages in an approximation. We will define relations $(\leq_\xi)_{\xi < \eta}$ which are almost, but not exactly, the same as Montalb\'an's (we leave the definition of these relations, and the proofs of their properties, to Lemma \ref{lem:rel-props}). An instance $s \leq_\xi t$ of the relation should be interpreted as saying that, from the point of view of $t$, $s$ is a $\xi$-true stage. A relation $s \leq_\xi t$ is almost, but not exactly, equivalent to saying that for all $\gamma \leq \xi + 1$, $\nabla_s^{\gamma} \subseteq \nabla_t^{\gamma}$. The problem is that we require the property (B4) below.

We also have a computable relation $\trianglelefteq$ on stages with $s \trianglelefteq t$ if and only if, for all $\xi < \eta$, $\nabla_s^{\xi+1} \subseteq \nabla^{\xi+1}_t$. We can interpret $s \trianglelefteq t$ as saying that $s$ appears to be a true stage (or $\eta$-true stage) from stage $t$. This relation is computable by (N2) above.

We will see that the relations $\leq_\xi$ satisfy the following properties:
\begin{enumerate}
	\item[(B0)] $\leq_0$ is the standard ordering on $\omega$.
	\item[(B1)] The relations $\leq_\xi$ are uniformly computable.
	\item[(B2)] Each $\leq_\xi$ is a \textit{preordering} (i.e., reflexive and transitive).
	\item[(B3)] The sequence of relations is \textit{nested} (i.e., if $\gamma \leq \xi$ and $s \leq_\xi t$, then $s \leq_\gamma t$).
	\item[(B4)] The sequence of relations is \textit{continuous} (i.e., if $\lambda$ is a limit ordinal, then $\leq_\lambda = \bigcap_{\xi < \lambda} \leq_\xi$).
	\item[(B5)] For every $s < t$ in $\omega$, if $s \leq_\xi t$ then $\nabla_s^{\xi + 1} \subseteq \nabla_t^{\xi+1}$.
	\item[(B6)] The sequence $t_0 <t_1 <\ldots$ of true stages satisfies $t_0 \trianglelefteq t_1 \trianglelefteq \cdots$ and $\bigcup_{i \in \omega} \nabla_{t_i}^{\eta} = \nabla^{\eta}$. We call the sequence of true stages the \emph{true path}.
	\item[(B7)] For $s \in \omega$, we can compute $H(s)= \max\{\xi < \eta \mid \nabla_s^\xi \not= \la \ra \}$. $H(s)$ has the property that if $t > s$ and $s \ntrianglelefteq t$, then $s \nleq_{H(s)} t$. We call $H(s)$ the \emph{height} of $s$.
	\item[(B8)] For every $\xi$ with $\xi < \eta$, and $r < s < t$, if $r \leq_{\xi} t$ and $s \leq_{\xi} t$, then $r \leq_{\xi} s$. Moreover, if $\xi$ is a successor ordinal, then it suffices to assume that $s \leq_{\xi - 1} t$.
	\item[(B9)] $s \trianglelefteq t$ if and only if for all $\xi < \eta$, $s \leq_\xi t$.
    \item[(B10)] If $t$ is a true stage and $s \trianglelefteq t$, then $s$ is also a true stage.
\end{enumerate}
 Properties (B0)-(B5) are as in Montalb\'an \cite{Montalban}. Our (B6) is a modification of Montalb\'an's (B6). (B7), (B9) and (B10) are new properties. (B8) is Montalb\'an's ($\clubsuit$) together with his Observation 2.1.

We will define, for convenience, the relations $\trianglelefteq_\xi$ for $\xi < \eta$. Let $s \trianglelefteq_\xi t$ if for all $\gamma \leq \xi + 1$, $\nabla_s^\gamma \subseteq \nabla_t^\gamma$. Note that $s \trianglelefteq t$ if and only if for all $\xi < \eta$, $s \trianglelefteq_\xi t$.

\begin{lemma}\label{lem:rel-props}
There is a sequence $(\leq_\xi)_{\xi < \eta}$ satisfying (B0)-(B10).
\end{lemma}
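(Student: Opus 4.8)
The plan is to explicitly define the relations $\leq_\xi$ in terms of Montalbán's nabla-approximations and then verify (B0)--(B10) essentially in the order they are listed, since the early properties feed into the later ones. The natural first attempt is to set $s \leq_\xi t$ iff $s \trianglelefteq_\xi t$, i.e. iff $\nabla^\gamma_s \subseteq \nabla^\gamma_t$ for all $\gamma \leq \xi+1$; by (N2) this is computable uniformly in $\xi$, giving (B1), and it is visibly reflexive and transitive, giving (B2). Nestedness (B3) is immediate from the definition, and continuity at limits (B4) holds because if $\lambda$ is a limit then $\bigcap_{\xi<\lambda}\{\gamma : \gamma \leq \xi+1\} $ exhausts all $\gamma<\lambda$ — wait, this is exactly the subtlety the authors flag: $\leq_\lambda$ defined via "$\gamma\leq\lambda+1$" is \emph{not} the intersection of the $\leq_\xi$ for $\xi<\lambda$, since the intersection only controls $\gamma<\lambda$, not $\gamma=\lambda$ or $\gamma=\lambda+1$. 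So the definition must be adjusted: I would define $s\leq_\xi t$ to mean $\nabla^\gamma_s\subseteq\nabla^\gamma_t$ for all $\gamma\leq\xi$ when $\xi$ is a limit, and for all $\gamma\leq\xi+1$ when $\xi$ is a successor or $0$ (and handle $0$ separately so that (B0), $\leq_0$ = usual order on $\omega$, holds — here one uses (N3) to see that $\nabla^0_s\subseteq\nabla^0_t$ together with $\nabla^1_s\subseteq\nabla^1_t$ is implied by $s<t$ once $\nabla^1$ is nonempty, or one simply builds $\leq_0$ to be standard order by fiat and checks compatibility). This bookkeeping of successor-vs-limit is what makes the statement nontrivial; the rest is chasing (N1)--(N3).

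With the definition fixed, I would verify the remaining properties as follows. (B5) (if $s<t$ and $s\leq_\xi t$ then $\nabla^{\xi+1}_s\subseteq\nabla^{\xi+1}_t$) is again near-immediate from the definition in the successor case, and in the limit case follows from (N3): from $\nabla^\xi_s\subseteq\nabla^\xi_t$ we need to promote to level $\xi+1$, which uses that true-stage information at higher levels is forced by agreement at a cofinal set of lower levels — here I would invoke the precise content of Montalbán's Lemma 7.5 / property (N3) with $\gamma=\xi$, $\xi$ replaced by $\xi+1$. For (B6) I would take the true path $t_0<t_1<\cdots$ from (N1) applied at $\xi=\eta$, observe $\nabla^{\xi+1}_{t_i}\subseteq\nabla^{\xi+1}_{t_{i+1}}$ for every $\xi<\eta$ by (N3) (since $\nabla^\eta_{t_i}\subseteq\nabla^\eta_{t_{i+1}}$), hence $t_i\trianglelefteq t_{i+1}$, and $\bigcup\nabla^\eta_{t_i}=\nabla^\eta$ is (N1). (B7): $H(s)$ is computable by (N2); the claim that $s\ntrianglelefteq t$ implies $s\nleq_{H(s)}t$ holds because $\trianglelefteq$ only involves levels $\xi+1\leq H(s)+1$ where $\nabla_s$ is nonempty — any disagreement witnessing $s\ntrianglelefteq t$ occurs at a level $\leq H(s)$, contradicting $s\leq_{H(s)}t$. (B9) is then the observation that $s\trianglelefteq t$ iff $\nabla^{\xi+1}_s\subseteq\nabla^{\xi+1}_t$ for all $\xi<\eta$ iff $s\leq_\xi t$ for all $\xi$ (using (B5) for one direction and the definition for the other). (B10) follows from (B9) plus (N1)-style uniqueness of the limit: if $t$ is true, $\nabla^\eta_t\subseteq\nabla^\eta$, and $s\trianglelefteq t$ forces $\nabla^{\xi+1}_s\subseteq\nabla^{\xi+1}_t\subseteq\nabla^{\xi+1}$ for all $\xi$, hence $\nabla^\eta_s\subseteq\nabla^\eta$ since $\nabla^\eta = \bigcup_{\xi<\eta}\nabla^{\xi+1}$-coherent data (this requires a small argument that agreement at all successor levels below $\eta$ pins down $\nabla^\eta$, which is where Montalbán's setup of the $\nabla$'s is used).

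The genuinely delicate point, and the one I expect to be the main obstacle, is (B8) — the "funnel" property that $r\leq_\xi t$ and $s\leq_\xi t$ with $r<s<t$ force $r\leq_\xi s$, with the strengthening that for successor $\xi$ one may assume only $s\leq_{\xi-1}t$. Montalbán proves the analogous ($\clubsuit$) together with his Observation 2.1, and the naive definition of $\leq_\xi$ via $\trianglelefteq_\xi$ does \emph{not} obviously satisfy this: knowing $\nabla^\gamma_r,\nabla^\gamma_s\subseteq\nabla^\gamma_t$ does not by itself give $\nabla^\gamma_r\subseteq\nabla^\gamma_s$, since $\nabla^\gamma_s$ could be a "bad" guess that happens to be an initial segment of the good guess $\nabla^\gamma_t$ without containing the even earlier good guess $\nabla^\gamma_r$. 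This is precisely why the authors say $\leq_\xi$ is "almost, but not exactly" $\trianglelefteq_\xi$: one must refine the definition so that $s\leq_\xi t$ additionally records that $s$ looks like a $\xi$-true stage \emph{in a downward-coherent way}, e.g. by also requiring that the \emph{stages} at which initial segments of $\nabla^{\leq\xi+1}$ stabilized below $s$ are respected. Concretely I would define $s\leq_\xi t$ to mean: $s\trianglelefteq_\xi t$ \emph{and} for every $\gamma\leq\xi+1$ (resp. $\leq\xi$ at limits) and every $r<s$, if $\nabla^\gamma_r\subseteq\nabla^\gamma_t$ then $\nabla^\gamma_r\subseteq\nabla^\gamma_s$ — or, more in the spirit of Montalbán, bake (B8) in by a closure/minimization. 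I would then re-check (B0)--(B7), (B9), (B10) survive this refinement (they do, since the extra clause is vacuous along the true path and is preserved under the relevant restrictions), and prove (B8) by unwinding the extra clause: given $r\leq_\xi t$, $s\leq_\xi t$, $r<s<t$, the clause in "$s\leq_\xi t$" applied to $r$ yields $\nabla^\gamma_r\subseteq\nabla^\gamma_s$ for all relevant $\gamma$, which is exactly $r\trianglelefteq_\xi s$, and the transfer of the extra clause itself from $t$ to $s$ uses transitivity of $\subseteq$. The successor strengthening comes from the fact that for successor $\xi=\beta+1$, agreement up to level $\xi=\beta+1$ is already controlled by agreement at level $\beta$ via (N3) applied with $\gamma=\beta$ inside $\nabla^{\beta+1}$. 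I would write this carefully, as it is the crux; everything else is routine verification against (N1)--(N3).
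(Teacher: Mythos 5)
Your proposal takes a genuinely different route from the paper's, and unfortunately it contains a gap at the step you yourself flag as the crux. The paper does not re-derive the relations from scratch: it takes Montalb\'an's definition of $\leq_\xi$ verbatim --- in particular, keeping $\trianglelefteq_\xi$ uniformly defined as ``$\nabla_s^\gamma \subseteq \nabla_t^\gamma$ for all $\gamma \leq \xi+1$'' with \emph{no} successor/limit case split --- and corrects it by forbidding the existence of certain ``bad'' witnesses $(\lambda,u,v)$ in a computable set $C$ of limit-ordinal configurations. The single modification for the present setting is to add the clause $\lambda < \eta$ inside the definition of $C$, which lets the argument go through for $\eta$ a limit; everything else is a citation to Montalb\'an's Lemma~7.8 (for (B0)--(B5), ($\clubsuit$), Observation~2.1), plus a short direct check of (B6), (B7), (B9), (B10).

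The concrete problem with your plan is the repeated appeal to (N3) in the wrong direction. (N3) says: if $\gamma \leq \xi$, $s \leq t$, and $\la\ra \neq \nabla_s^\xi \subseteq \nabla_t^\xi$, then $\nabla_s^\gamma \subseteq \nabla_t^\gamma$. That is, agreement at a \emph{higher} level pushes \emph{down} to lower levels; it never promotes agreement at level $\xi$ up to level $\xi+1$. But you invoke exactly such an upward promotion twice: once for (B5) in the limit case (``from $\nabla^\xi_s\subseteq\nabla^\xi_t$ we need to promote to level $\xi+1$ \dots which uses (N3)''), and once for the successor strengthening in (B8) (``agreement up to level $\xi=\beta+1$ is already controlled by agreement at level $\beta$ via (N3)''). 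Neither step is available. Your proposed alternative definition at limits (requiring only $\gamma\leq\xi$) therefore leaves (B5) unproved, and (B4) is also not actually resolved: $\bigcap_{\xi<\lambda}\leq_\xi$ controls $\nabla^\gamma$ for $\gamma<\lambda$, and you still have no way to conclude $\nabla^\lambda_s\subseteq\nabla^\lambda_t$, regardless of whether your $\leq_\lambda$ demands $\gamma\leq\lambda$ or $\gamma\leq\lambda+1$. This is precisely the phenomenon that Montalb\'an's set $C$ and the pointer $\gamma_{\lambda,v}$ are designed to handle, and it is not recoverable by a generic ``downward-coherence'' closure clause. Your added clause does give the unstrengthened (B8) (your transfer argument for it is correct), but it does not supply (B4), (B5), or the successor refinement of (B8), and you do not verify that the added clause is compatible with (B3) nestedness or (B9) --- the remark that it is ``vacuous along the true path'' only addresses the limiting behavior, not the on-stage computable combinatorics the metatheorem needs.
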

\begin{proof}
The proof of this Lemma is very similar to the proof of Lemma 7.8 of \cite{Montalban}. The definition of our relations $\leq_\xi$ is the same as Montalb\'an's, except for one small change. Let $C$ be the set of tuples $(\lambda,u,v)$ where $\lambda$ is a limit ordinal, $\nabla^{\lambda}_u \subsetneq \nabla^{\lambda}_v$, $\nabla_u^{\lambda + 1} \nsubseteq \nabla_v^{\lambda + 1}$, and if there is $r$ with $\nabla_u^\lambda \subsetneq \nabla_r^\lambda \subsetneq \nabla_v^\lambda$ then $\nabla_u^{\lambda + 1} \subseteq \nabla_r^{\lambda + 1}$. Let $\gamma_{\lambda,v}$ be such that the last entry of $\nabla_v^\lambda$ is $\nabla_v^{\gamma_{\lambda,v}}(0)$.

For $\xi < \eta$, define
\[ s \leq_\xi t \Leftrightarrow s \trianglelefteq_\xi t \text{ and } \neg \exists (\lambda,u,v) \in C (\lambda < \eta \text{ and } \gamma_{\lambda,v} < \xi \text{ and } u \leq s < v \trianglelefteq_{\gamma_{\lambda,v}} t). \]
The only difference between our definition and Montalb\'an's is that we require $\lambda < \eta$.

The proof that $(\leq_\xi)_{\xi < \eta}$ satisfies (B0)-(B5) is the same as the proof of Lemma 7.8 of \cite{Montalban}. (B6) follows from (N1) and the fact that if $r < t$ are both true stages, then $r \trianglelefteq s$. Indeed, if $r < t$ are both true stages, then $\nabla^\eta_r \subseteq \nabla^\eta_s$, so by Lemmas 7.5 and 7.6 in \cite{Montalban}, $\nabla_{r}^{\xi + 1} \subseteq \nabla_{s}^{\xi + 1}$ for all $\xi < \eta$, so that $r \trianglelefteq s$. (B7) follows from the fact that if $s \not\trianglelefteq_\xi t$, then $\nabla^\xi_s \not= \la \ra$, and for each $s$, there are only finitely many $\xi$ with $\nabla^\xi_s \not= \la \ra$, which are uniformly computable in $s$. (B8) follows from the proof of Montalb\'an's ($\clubsuit$) in Lemma 7.8 together with his Observation 2.1\footnote{Montalb\'an uses, in the proof of ($\clubsuit$), Observation 2.1 for the relations $(\trianglelefteq_\xi)_{\xi < \eta}$. The given proof of Observation 2.1 requires continuity (B4), which $(\trianglelefteq_\xi)_{\xi < \eta}$ does not have. However, it is easy to see that the proof of Lemma 7.7 suffices to prove Observation 2.1 for these relations.}. The proof of (B6) in Lemma 7.8 of \cite{Montalban} also suffices to prove (B9). (B10) is immediate from the definitions if $\eta$ is a successor, and follows from Lemma 7.6 in \cite{Montalban} if $\eta$ is limit.
\end{proof}

\subsection{\texorpdfstring{$\eta$-systems and the metatheorem}{Eta-systems and the metatheorem}}

We are now ready to define an $\eta$-system. The definition is essentially the same as for Montalb\'an, except that what Montalb\'an would have called an $\eta$-system, we call an $\eta+1$-system.
\begin{defn}
An $\eta$-system is a tuple $(L,P,(\leq_\xi^L)_{\xi < \eta},E)$ where:
\begin{enumerate}
	\item $L$ is a c.e.\ subset of $\omega$ called the set of \textit{states}.
	\item $P$ is a c.e.\ subset of $L^{<\omega}$ called the \textit{action tree}.
	\item $(\leq_\xi^L)_{\xi < \eta}$ is a nested sequence of c.e.\ pre-orders on $L$ called the \textit{restraint relations}.
\item  $\ell \trianglelefteq^L \ell'$ is c.e., where we define $\ell \trianglelefteq^L \ell'$ if and only if $\ell \leq^L_\xi \ell'$ for all $\xi < \eta$.
	\item $E \subseteq L \times \omega$ is a c.e.\ set called the \textit{enumeration function}, and is interpreted as $E(l) = \{ k \in \omega : (l,k) \in E\}$. We require that for $\ell_0,\ell_1 \in L$ with $\ell_0 \leq_0^L \ell_1$, $E(\ell_0) \subseteq E(\ell_1)$.
\end{enumerate}

\end{defn}

\begin{defn}
A \textit{0-run} for $(L,P,(\leq_\xi^L)_{\xi < \eta},E)$ is a finite or infinite sequence $\pi = (\ell_0,\ell_1,\ldots)$ which is in $P$ if it is a finite sequence, or is a path through $P$ if it is an infinite sequence, such that for all $s,t < |\pi|$ and $\xi < \eta$,
\[ s \leq_\xi t \Rightarrow \ell_s \leq_\xi^L \ell_t.\]
If $\pi$ is a 0-run, let $E(\pi) = \bigcup_{s < |\pi|} E(\ell_i)$.
\end{defn}

Given an infinite 0-run $\ell_0,\ell_1,\ldots$ of an $\eta$-system $(L,P,(\leq_\xi^L)_{\xi < \eta},E)$, let $t_0 \trianglelefteq t_1 \trianglelefteq t_2 \trianglelefteq \cdots$ be the true stages. Then by the properties of $E$ above, $E(\pi) = \bigcup_{i \in \omega} E(\ell_{t_i})$. So $E(\pi)$ is c.e., but it is determined by the true stages.

Montalb\'an defines an extendability condition and a weak extendability condition. For our extendability condition, we weaken Montalb\'an's extendability condition even further (as well as modifying it slightly to allow limit ordinals). In order to define our extendability condition, we need the following definition.

\begin{definition}\label{defn: associated stages and ordinals} To any stage $s >0$, we effectively associate a sequence of stages and ordinals as follows.

Choose $t^* < s$ greatest such that $t^* \trianglelefteq s$. Some such $t^*$ exists as $0 \trianglelefteq s$. Now for each $\xi < \eta_0$, let $t_\xi < s$ be the largest such that $t_\xi \leq_\xi s$. Note that $t^* \leq t_\xi$ for each $\xi$ as by (B9) $t^* \leq_\xi s$.

There may be infinitely many $\xi < \eta$, but there are only finitely many possible values of $t_\xi$ since they are bounded by $s$. Since the $\leq_\xi$ are nested (B3), if $\gamma \leq \xi < \eta$, then $t_\xi \leq t_\gamma$. Now we will effectively define stages $t^* = s_k < \cdots < s_0 = s-1$ so that $\{s_0,\ldots,s_k\} = \{t_\xi : \xi < \eta\}$ as sets. Let $s_0 = t_0 = s-1$. Suppose that we have defined $s_i$. If $s_i \trianglelefteq s$, then $k = i$ and we are done. Otherwise, let $\xi_i < \eta$ be the greatest such that $s_i = t_{\xi_i}$. By definition of $s_i$, it is of the form $t_\xi$ for some $\xi$. We can find the greatest such by computably searching for $\xi_i$ such that $s_i \leq_{\xi_i} s$ but $s_i \not\leq_{\xi_i +1} s$; some such $\xi_i$ exists since the relations are continuous and nested. Let $s_{i+1} = t_{\xi_i + 1}$. Since $s_i \nleq_{\xi_i + 1} s$, $s_{i+1} < s_i$. This completes the definition of $s_k < \cdots < s_0 =s-1$ and $\xi_0 < \ldots < \xi_{k-1} < \eta$.

By (B8), for $i < k$, since $s_{i+1} \leq_{\xi_i+1} s$ and $s_i \leq_{\xi_i} s$, $s_{i+1} \leq_{\xi_i + 1} s_i$.

\end{definition}
\begin{definition}

We say that an $\eta$-system $(L, P, (\leq_\xi^L)_{\xi \leq \eta}, E)$ satisfies the \emph{extendability condition} if: whenever we have a finite $0$-run $\pi = \la \ell_0, ..., \ell_{s-1} \ra$ such that for all $i < k$, $\ell_{s_{i+1}} \leq_{\xi_i+1}^L \ell_{s_i}$, where $s_k < s_{k-1} < ... < s_0= s-1$ and $\xi_0 < \xi_1 < ... < \xi_{k-1} < \eta$ are the associated sequences of stages and ordinals to $s$ as in Definition \ref{defn: associated stages and ordinals}, then there exists an $\ell \in L$ such that $\pi \conc \ell \in P$, $\ell_{s_k} \trianglelefteq^L \ell$, and for all $i < k, \ell_{s_i} \leq_{\xi_i}^L \ell$.

\[ \xymatrix@C=5em{ &  &  & s\ar@{..}[drr]|*[@]{\geq_{\xi_0}}\ar@{..}[dr]|*[@]{\geq_{\xi_1}}\\
s_{k}\ar@{}[r]|*[@]{\leq_{\xi_{k-1}+1}}\ar@{..}[urrr]|*[@]{\trianglelefteq} & s_{k-1}\ar@{}[r]|*[@]{\leq_{\xi_{k-2}+1}}\ar@{..}[urr]|*[@]{\leq_{\xi_{k-1}}} & s_{k-2}\ar@{}[r]|*[@]{\leq_{\xi_{k-3}+1}}\ar@{..}[ur]|*[@]{\leq_{\xi_{k-2}}} &  \cdots\ar@{}[r]|*[@]{\leq_{\xi_{1}+1}} & s_{1}\ar@{}[r]|*[@]{\leq_{\xi_{0}+1}} & s_{0} \\
\ell_{s_{k}}\ar@{}[r]|-*[@]{\leq^L_{\xi_{k-1}+1}}\ar@{..}[drrr]|*[@]{\trianglelefteq^L} & \ell_{s_{k-1}}\ar@{}[r]|-*[@]{\leq^L_{\xi_{k-2}+1}}\ar@{..}[drr]|*[@]{\leq^L_{\xi_{k-1}}} & \ell_{s_{k-2}}\ar@{}[r]|-*[@]{\leq^L_{\xi_{k-3}+1}}\ar@{..}[dr]|*[@]{\leq^L_{\xi_{k-2}}} & \cdots\ar@{}[r]|-*[@]{\leq^L_{\xi_{1}+1}} & \ell_{s_{1}}\ar@{}[r]|-*[@]{\leq^L_{\xi_{0}+1}} & \ell_{s_{0}}\\
 &  &  & \ell\ar@{..}[urr]|*[@]{\geq^L_{\xi_0}}\ar@{..}[ur]|*[@]{\geq^L_{\xi_1}}
}
 \]
\end{definition}

Now we are ready for the metatheorem.

\begin{thm}\label{thm:meta}
For every $\eta$-system $(L,P,(\leq_\xi^L)_{\xi < \eta},E)$ with the extendability condition, there is a computable infinite $0$-run $\pi$. A 0-run can be built uniformly in the $\eta$-system.
\end{thm}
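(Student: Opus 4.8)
The plan is to build the infinite $0$-run $\pi$ by stages, committing to $\ell_s$ at stage $s$, in such a way that the sequence restricted to the true stages is the object we want. The guiding principle is that we must maintain, at each finite stage $s$, the hypothesis of the extendability condition for the \emph{next} stage; then extendability hands us a legal value of $\ell_s$. Concretely, at stage $s > 0$ we have already defined $\ell_0, \dots, \ell_{s-1}$, forming a finite $0$-run; we compute the associated stages $t^* = s_k < \cdots < s_0 = s-1$ and ordinals $\xi_0 < \cdots < \xi_{k-1} < \eta$ for $s$ as in Definition \ref{defn: associated stages and ordinals}. We need to know that $\ell_{s_{i+1}} \leq^L_{\xi_i+1} \ell_{s_i}$ for all $i < k$ so that we may invoke extendability; this should follow inductively from how the $\ell$'s were chosen at earlier stages together with (B8)-type transfer, and verifying it is the crux of the argument. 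Given this, extendability produces $\ell \in L$ with $\pi \conc \ell \in P$, $\ell_{s_k} \trianglelefteq^L \ell$, and $\ell_{s_i} \leq^L_{\xi_i} \ell$ for all $i < k$; set $\ell_s = \ell$.

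The two things to check are: (a) the resulting infinite sequence $\pi = (\ell_0, \ell_1, \dots)$ is a genuine $0$-run, i.e.\ $s \leq_\xi t \Rightarrow \ell_s \leq^L_\xi \ell_t$ for all $s, t < \omega$ and all $\xi < \eta$; and (b) at every stage the hypothesis of extendability is met, so the construction never gets stuck. For (b), the inductive maintenance argument runs as follows: when we pass from stage $s$ to stage $s+1$, the associated sequence for $s+1$ is built from stages $\leq s$, and each of the required relations $\ell_{s'_{j+1}} \leq^L_{\xi'_j + 1} \ell_{s'_j}$ is either one we explicitly arranged when we defined $\ell_{s'_j}$ (if $s'_j$ was itself the ``current'' stage at some point and $s'_{j+1}$ one of its associated stages), or is obtained by combining such a relation with the transitivity/nestedness of the $\leq^L_\xi$ and the combinatorial facts (B3), (B8), (B9) about the associated sequences --- in particular, using that $s'_{j+1} \leq_{\xi'_j+1} s'_j$ (the last displayed line of Definition \ref{defn: associated stages and ordinals}) gets transferred through the run. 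For (a), fix $s < t$ with $s \leq_\xi t$; we want $\ell_s \leq^L_\xi \ell_t$. By (B8) applied repeatedly (``telescoping'' from $t$ down through the associated stages of $t$ and their associated stages, etc.), $s$ will lie on the chain $s_k < \cdots < s_0 = t - 1$ for $t$, or on the chain for some intermediate stage, and the relevant restraint relation is exactly one of the relations we enforced when defining $\ell_t$ (namely $\ell_{s_i} \leq^L_{\xi_i} \ell_t$ or $\ell_{s_k} \trianglelefteq^L \ell_t$), possibly composed with earlier ones via transitivity. The case $s = t$ is reflexivity (B2), and when $\xi = 0$ one just needs $\ell_s \leq^L_0 \ell_t$ for $s < t$, which should come out of the $\trianglelefteq^L$ clauses since $\trianglelefteq^L$ refines $\leq^L_0$ by nestedness.

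The \textbf{main obstacle} is the bookkeeping in step (b): showing that the extendability hypothesis is preserved from one stage to the next. The subtlety is that the associated sequence of stages/ordinals for $s+1$ is \emph{not} simply an extension of that for $s$ --- stages can drop out and the ordinals $\xi_i$ can change --- so one cannot just append and must instead re-derive each required relation $\ell_{s'_{j+1}} \leq^L_{\xi'_j+1} \ell_{s'_j}$ from scratch. The right way to organize this is to prove, as a lemma carried along the induction, a uniform statement of the form: ``for every stage $t$ and every pair $r < r'$ among $\{t, s_0(t), s_1(t), \dots, s_k(t)\}$ (the associated stages of $t$), if $r \leq_\xi r'$ then $\ell_r \leq^L_\xi \ell_{r'}$,'' and deduce both (a) and (b) from it using only (B0)--(B10) and the properties of $\eta$-systems (preorder, nested, the $E$-monotonicity is irrelevant here). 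Finally, uniformity: every step of the construction --- computing the associated sequences, querying the c.e.\ relations $\leq^L_\xi$ and the c.e.\ set $P$, and extracting the witness $\ell$ promised by extendability --- is effective uniformly in (an index for) the $\eta$-system, so $\pi$ is computable and can be produced uniformly, as claimed.
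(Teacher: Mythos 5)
Your plan has the right overall shape---build the run greedily, stage by stage, invoking the extendability condition each time---and this is indeed the paper's approach. But you have misidentified where the work is, and as written the argument has a gap. You call verifying the extendability hypothesis ``the crux'' and ``the main obstacle,'' and you propose to carry an auxiliary lemma along the induction so you can ``re-derive each required relation from scratch.'' That is a wrong turn: the hypothesis is essentially automatic. The invariant to maintain is simply that $\pi = \la \ell_0,\ldots,\ell_{s-1}\ra$ is a $0$-run. Given that, the extendability hypothesis follows in one line: Definition~\ref{defn: associated stages and ordinals} (via (B8), successor clause) already records that $s_{i+1} \leq_{\xi_i+1} s_i$, and since $\pi$ is a $0$-run this immediately gives $\ell_{s_{i+1}} \leq^L_{\xi_i+1} \ell_{s_i}$. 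No bookkeeping, no auxiliary lemma. Note also that your proposed auxiliary lemma is just a restriction of the $0$-run property to a particular family of pairs, so it is strictly weaker than the invariant you actually need for (a), and redundant once you have the $0$-run invariant.

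Your verification of (a) is also more complicated than necessary, and the ``telescoping'' is not quite right ($s$ need not lie on any of the chains you describe). If $t \leq_\xi s$ with $t < s$, then $t \leq t_\xi$, and a \emph{single} application of (B8) gives $t \leq_\xi t_\xi$. The inductive hypothesis (the prefix is a $0$-run) gives $\ell_t \leq^L_\xi \ell_{t_\xi}$; the choice of $\ell_s$ from the extendability condition gives $\ell_{t_\xi} \leq^L_\xi \ell_s$ (using nestedness when $t_\xi = s_i$ with $\xi \leq \xi_i$, or the clause $\ell_{s_k} \trianglelefteq^L \ell_s$ when $t_\xi = s_k$); transitivity finishes. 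So: same method, but the key observation that makes it go through cleanly---that the $0$-run invariant alone, combined with (B8), yields the extendability hypothesis---is missing, and what you have written stops short of a proof.
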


\begin{proof}[Proof of Theorem \ref{thm:meta}]
The proof is essentially the same as the proof of Theorem 3.2 in \cite{Montalban}. By the trivial case of the extendability condition, there is $\ell_0 \in L$ with $\la \ell_0 \ra \in P$. Now suppose that we have a 0-run $\pi = \la \ell_0,\ldots,\ell_{s-1} \ra$. We want to define $\ell_s \in L$ such that $\pi \conc \ell_s \in P$, and such that for every $\xi < \eta$, if $t \leq_\xi s$, then $\ell_t \leq_\xi^L \ell_s$.

Let $\{ t_\xi \mid \xi < \eta \}$, $s_k < \ldots s_0 = s-1$, and $\xi_1 < \ldots < \xi_k$ be as in Definition \ref{defn: associated stages and ordinals}. If $t \leq_\xi s$, then $t \leq t_\xi$, and by (B8), $t \leq_\xi t_\xi$, so since $\pi$ is a $0$-run $\ell_t \leq_\xi^L \ell_{t_\xi}$. So it is sufficient to find $\ell$ with $\pi \conc \ell \in P$ such that, for $\xi < \eta$, $\ell_{t_\xi} \leq_\xi^L \ell$. That is, we must find an $\ell$ with $\pi \conc \ell \in P$, $\ell_{s_k} \trianglelefteq^L \ell$ and $\ell_{s_i} \leq_{\xi_i}^L \ell$ for $0 \leq i <k$.

By (B8), for $i \leq k$, since $s_{i+1} \leq_{\xi_i+1} s$ and $s_i \leq_{\xi_i} s$, $s_{i+1} \leq_{\xi_i + 1} s_i$. Since $p$ is a 0-run, $\ell_{s_{i+1}} \leq^L_{\xi_i + 1} \ell_{s_i}$. By the extendability condition, there is $\ell \in L$ with $p \conc \ell \in P$, $\ell_{s_k} \trianglelefteq^L \ell$, and $\ell_{s_i} \leq^L_{\xi_i} \ell$ for $i < k$. We can find such an $l$ effectively, since we have described how to compute the $s_i$ and since the relations $\leq^L_\xi$ and $\trianglelefteq^L$ are computable.
\end{proof}

\section{Proof of Theorem \ref{thm:main1}} \label{sec:mainthm}

In this section, we will give the proof of Theorem \ref{thm:main1}. To begin, we prove the following lemma which we will use for coding.

\begin{lemma}\label{lem:partial-isos-different}
Let $\bar{x}$ be a tuple. Let $\alpha_1 > \beta_1,\ldots,\alpha_n > \beta_n$ be computable ordinals with $\beta_1 \geq \beta_2 \geq \cdots \geq \beta_n$. Let $\bar{u}_1,\ldots,\bar{u}_n$ and $\bar{v}_1,\ldots,\bar{v}_n$ be tuples such that $|\bar{u}_{i+1}| = |\bar{u}_i| + |\bar{v}_i|$ and such that $\bar{v}_i$ is $\alpha_i$-free over $\bar{u}_i$. Then there is a tuple $\bar{y}$ such that, for each $i = 1,\ldots,n$,
\begin{enumerate}
	\item $\bar{x} \res_{|\bar{u}_1|} = \bar{y}\res_{|\bar{u}_1|}$,
	\item $\bar{x} \res_{|\bar{u}_i| + |\bar{v}_i|} \leq_{\beta_i} \bar{y} \res_{|\bar{u}_i| + |\bar{v}_i|}$,
	\item $\bar{y} \res_{|\bar{u}_i| + |\bar{v}_i|} \ncong \bar{u}_i \bar{v}_i$.
\end{enumerate}
\end{lemma}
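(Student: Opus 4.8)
The plan is to build $\bar y$ by a finite induction that simultaneously handles all $n$ coding requirements, exploiting the fact that the ordinals $\beta_i$ are non-increasing. I will think of $\bar y$ as being assembled in $n$ blocks: the initial segment $\bar y\res_{|\bar u_1|}$ is forced to equal $\bar x\res_{|\bar u_1|}$ by (1), and then for $i=1,\dots,n$ the block of length $|\bar v_i|$ occupying positions $|\bar u_i|+1$ through $|\bar u_i|+|\bar v_i| = |\bar u_{i+1}|$ is chosen to achieve (2) and (3) for that $i$. The key observation is that the $\alpha_i$-freeness of $\bar v_i$ over $\bar u_i$ says: for any extension $\bar u_i\bar v_i\bar w$ and any $\gamma<\alpha_i$, there are $\bar v_i',\bar w'$ with $\bar u_i\bar v_i\bar w \leq_\gamma \bar u_i\bar v_i'\bar w'$ but $\bar u_i\bar v_i' \ncong \bar u_i\bar v_i$. (I am using $\bar u_i = \bar x\res_{|\bar u_i|}$, which is legitimate because, as I will maintain in the induction, $\bar y\res_{|\bar u_i|}$ will be back-and-forth equivalent to $\bar u_i$ at a high enough level — in fact I will take the $\bar u_i$ in the hypothesis to literally be initial segments of $\bar x$, or reduce to that case.)

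Here is the induction in more detail. Process the blocks from $i=n$ down to $i=1$ — or equivalently, choose a single tuple $\bar y$ by applying freeness to a nested sequence of demands. Concretely: since $\beta_1\geq\dots\geq\beta_n$, I can first use that $\bar v_n$ is $\alpha_n$-free over $\bar u_n$ with $\beta_n<\alpha_n$ to find a replacement $\bar v_n'$ for the last block together with a continuation so that everything beyond position $|\bar u_n|$ in $\bar x$ maps $\leq_{\beta_n}$ to the modified tuple while $\bar u_n\bar v_n' \ncong \bar u_n\bar v_n$; this secures (2) and (3) for $i=n$. Then I move to $\bar v_{n-1}$, $\alpha_{n-1}$-free over $\bar u_{n-1}$ with $\beta_{n-1}\geq\beta_n$: I apply freeness at level $\beta_{n-1}$ (legal since $\beta_{n-1}<\alpha_{n-1}$) to get a modification $\bar v_{n-1}'$ and a continuation that is $\leq_{\beta_{n-1}}$-above the tail of $\bar x$ from position $|\bar u_{n-1}|$ on, while $\bar u_{n-1}\bar v_{n-1}'\ncong \bar u_{n-1}\bar v_{n-1}$. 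Because $\beta_{n-1}\geq\beta_n$, the $\leq_{\beta_{n-1}}$-relation obtained here refines (implies) the $\leq_{\beta_n}$-relation needed for the already-handled block $n$ — this is exactly where monotonicity of the $\beta_i$ is used, and it is what lets the later modifications not destroy the earlier coding. Iterating down to $i=1$, and finally splicing the constant initial block $\bar x\res_{|\bar u_1|}$ on top (which costs nothing since $\leq_\beta$ is reflexive on a tuple and preserved under common initial segments), yields the desired $\bar y$.

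The main technical point to get right — and the step I expect to be the real obstacle — is the bookkeeping that makes the nested applications of freeness compatible: when I apply $\alpha_i$-freeness to modify block $i$, freeness only promises me a $\leq_{\beta_i}$-comparison of the \emph{whole} tuple (initial segment, $\bar v_i$, and all later blocks), and I need to check that (a) the ``later blocks'' part of this comparison is consistent with the modifications already made for $j>i$, which requires that those earlier modifications were done at a level $\geq\beta_i$ (true, since I process in decreasing-$\beta$ order, so $\beta_j\leq\beta_i$ for $j>i$... wait — I process $i=n$ first, so for the $i$-th step the already-modified blocks are $j>i$ with $\beta_j\leq\beta_i$, and a $\leq_{\beta_j}$-fact is \emph{implied} by a $\leq_{\beta_i}$-fact when $\beta_j\leq\beta_i$, using the nestedness of the back-and-forth relations), and (b) the non-isomorphism $\bar u_i\bar v_i'\ncong\bar u_i\bar v_i$ at the relevant finite level is genuinely recorded in the final $\bar y$, i.e.\ that subsequent modifications of blocks $j<i$ don't change $\bar y\res_{|\bar u_i|+|\bar v_i|}$ in a way that reintroduces an isomorphism — but blocks $j<i$ are processed \emph{later} and sit at positions $<|\bar u_i|$, so modifying them could in principle affect the comparison; this is resolved by noting that when I modify block $j$ I again get a $\leq_{\beta_j}$-fact about the full tuple with $\beta_j\geq\beta_i$, hence a $\leq_{\beta_i}$-fact, and combining it with the preservation of $\ncong$ under $\leq$-equivalence at the appropriate level. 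Once these two consistency checks are organized — essentially a single lemma saying ``$\leq_\beta$ at a higher $\beta$ implies the needed facts at lower $\beta$, and $\ncong$ at a given finite Gödel-number level is $\leq_\beta$-invariant for $\beta$ large enough'' — the construction goes through by straightforward finite induction on $n$.
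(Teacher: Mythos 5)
Your construction runs the induction in the wrong direction, and the fix you propose for the resulting problem does not work; the paper processes the blocks in \emph{increasing} order $i=1,\ldots,n$, and this order is essential.

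Here is the precise issue. You process from $i=n$ down to $i=1$. When you later apply $\alpha_i$-freeness over $\bar u_i$ (for small $i$) to the current tuple, freeness only fixes the first $|\bar u_i|$ coordinates and is free to change everything beyond that---in particular it changes positions $|\bar u_j|+1,\ldots,|\bar u_j|+|\bar v_j|$ for each $j>i$, which are exactly the places where you already coded $\ncong$ for block $j$. You claim this is harmless because the new modification yields a $\leq_{\beta_i}$-fact about the whole tuple with $\beta_i\geq\beta_j$, and that $\ncong$ is ``preserved under $\leq$-equivalence at the appropriate level.'' That last claim is false: $\leq_\beta$ is a one-directional preorder, not an equivalence, and it does not preserve non-automorphism at any level. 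Indeed, the entire content of $\alpha$-freeness is that one can have $\bar a \leq_\beta \bar a'$ with $\bar a' \ncong \bar a$; symmetrically, starting from some $\bar a\ncong \bar u\bar v$ there is nothing preventing $\bar a\leq_\beta \bar a'$ with $\bar a'\cong\bar u\bar v$. So a later modification at level $\beta_i$ (even with $\beta_i$ large) can silently reintroduce an automorphism to $\bar u_j\bar v_j$ for a block $j>i$ that you thought was already handled, and there is no way to rule this out from the hypotheses.

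The paper avoids this entirely by processing in increasing order of $i$. At step $m+1$ one applies $\alpha_{m+1}$-freeness over $\bar u_{m+1}$ (transported via the automorphism to the current initial segment), which fixes all coordinates $\leq |\bar u_{m+1}|$. Since every earlier block $i\leq m$ lies in positions $\leq |\bar u_i|+|\bar v_i|=|\bar u_{i+1}|\leq|\bar u_{m+1}|$, the restriction $\bar x_{m+1}\res_{|\bar u_i|+|\bar v_i|}$ is \emph{identical} to $\bar x_m\res_{|\bar u_i|+|\bar v_i|}$, so the earlier $\ncong$ facts are preserved verbatim rather than through a (non-existent) invariance of $\ncong$ under $\leq_\beta$. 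The monotonicity $\beta_1\geq\cdots\geq\beta_n$ is then used only to preserve the $\leq_{\beta_i}$-facts of clause (2) for the \emph{later} blocks $i>m+1$: since $\beta_i\leq\beta_{m+1}$, the step $\bar x_m\leq_{\beta_{m+1}}\bar x_{m+1}$ gives $\bar x_m\leq_{\beta_i}\bar x_{m+1}$ and composes with the inductive $\bar x\leq_{\beta_i}\bar x_m$. Your proposal correctly identifies that monotonicity of the $\beta_i$ is the engine, but applies it to the wrong clause: it can protect $\leq_\beta$-facts, not $\ncong$-facts, and your ordering forces you to protect exactly the latter.
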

\begin{proof}
We will inductively define tuples $\bar{x}_0, \ldots, \bar{x}_n$, so that taking $\bar{y}=\bar{x}_n$ satisfies the lemma.

Begin with $\bar{x}_0 = \bar{x}$, so $\bar{x}_0$ satisfies (1) and (2).

Given $\bar{x}_m$ satisfying (1) and (2) for all $i$, and (3) for $i = 1,\ldots,m$, define $\bar{x}_{m+1}$ as follows. If $\bar{x}_m$ already satisfies (3) for $i=m+1$, set $\bar{x}_{m+1} = \bar{x}_m$. Otherwise, $\bar{x}_m \res_{|\bar{u}_{m+1}| + |\bar{v}_{m+1}|} \cong \bar{u}_{m+1} \bar{v}_{m+1}$. Since $\bar{v}_{m+1}$ is $\alpha_{m+1}$-free over $\bar{u}_{m+1}$, there is $\bar{x}_{m+1}$ with $\bar{x}_m \leq_{\beta_{m+1}} \bar{x}_{m+1}$, $\bar{x}_m \res_{|\bar{u}_{m+1}|} = \bar{x}_{m+1} \res_{|\bar{u}_{m+1}|}$, and $\bar{x}_{m+1} \res_{|\bar{u}_{m+1}| + |\bar{v}_{m+1}|} \ncong \bar{u}_{m+1} \bar{v}_{m+1}$. So $\bar{x}_{m+1}$ satisfies (3) for $i = m+1$. Note that since $\bar{x}_m \res_{|\bar{u}_{m+1}|} = \bar{x}_{m+1} \res_{|\bar{u}_{m+1}|}$, we have $\bar{x}_{m+1} \res_{|\bar{u}_i| + |\bar{v}_i|} = \bar{x}_{m} \res_{|\bar{u}_i| + |\bar{v}_i|}$ for $i \leq m$, so that $\bar{x}_{m+1}$ satisfies (1) and satisfies (2) and (3) for $1 \leq i \leq m$. Since $\bar{x}_m \leq_{\beta_{m+1}} \bar{x}_{m+1}$, and for $i \geq m+1$, $\beta_i \leq \beta_{m+1}$, we have $\bar{x} \res_{|\bar{u}_i| + |\bar{v}_i|} \leq_{\beta_i} \bar{x}_{m} \res_{|\bar{u}_i| + |\bar{v}_i|} \leq_{\beta_i} \bar{x}_{m+1} \res_{|\bar{u}_i| + |\bar{v}_i|}$ for such $i$. So (2) holds for $\bar{x}_{m+1}$.
\end{proof}

Now we are ready to prove the main theorem, Theorem \ref{thm:main1}. The proof will use the $\eta$-systems as developed in the previous section, together with a strategy similar to that in the proof of Theorem \ref{thm:easy-version}. Theorem \ref{thm:main1} will follow easily from the following technical result.

\begin{thm}
Let $\scrA$ be a structure. If $\eta$ is an ordinal and $\scrA$ is not $\Delta^0_\beta$ categorical on any cone for any $\beta < \eta$, then there exists an $\textup{\be}$ such that for all $\textup{\bd} \geq \textup{\be}$, there exists a \textup{\bd}-computable copy $\scrB$ of $\scrA$ such that
\begin{enumerate}
	\item there is a $\Delta^0_\eta(\textup{\bd})$-computable isomorphism between $\scrA$ and $\scrB$ and
	\item for every isomorphism $f$ between $\scrA$ and $\scrB$, $f \oplus \textup{\bd}$ computes $\Delta^0_\eta(\textup{\bd})$.
\end{enumerate}
\end{thm}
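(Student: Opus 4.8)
The plan is to build $\scrB$ via an $\eta$-system, generalizing the construction in the proof of Theorem \ref{thm:easy-version}, where now the coding is done against a $\Delta^0_\eta(\bd)$-complete function rather than a single $\Sigma^0_1$ set. First I would fix $\be$ large enough that $\scrA$ is $\be$-computable, $\eta$ is $\be$-computable, $\scrA$ is $\eta$-friendly relative to $\be$ (so all the back-and-forth relations $\leq_\beta$ for $\beta < \eta$ are c.e.\ uniformly relative to $\be$), $\be$ computes a Scott family in which each tuple satisfies a unique formula and $\be$ can decide which, and—crucially—using Corollary \ref{cor:not-cat gives freeness}, $\be$ can, given any tuple $\cbar$ and any $\beta < \eta$, compute the least tuple $\abar$ that is $\beta$-free over $\cbar$. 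Fix $\bd \geq \be$. We work relative to $\bd$ throughout; the $\Delta^0_\eta(\bd)$-complete function to be coded is Montalb\'an's $\nabla^\eta$ (relative to $\bd$), with its approximation $\nabla^\eta_s$ and the true-stage machinery of Section \ref{sec:metatheorem}.

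The heart of the argument is to set up the $\eta$-system $(L,P,(\leq^L_\xi)_{\xi<\eta},E)$ whose states encode finite approximations $(f_s, \scrB[s])$ together with the current coding tuples. As in the easy case, we maintain an increasing sequence of ``coding locations'' $\bbar_n \subseteq \omega$ with images $\abar_n = f(\bbar_n)$ in $\scrA$, but now at location $n$ we want to code the value $\nabla^\eta(n)$ (or more precisely, guess at $\nabla^\eta$) by a nested cascade of freeness conditions at levels $\beta$ coming from the ordinals $\xi_i$ associated to stages. When our belief about the relevant level of $\nabla$ changes, we redefine $f$ on $\bbar_n$ and beyond; the key combinatorial point is that the tuple $\bbar_n$ should be $\beta$-free over $f(\scrB\res\bbar_n)$ with $\beta$ chosen according to the true level, and Lemma \ref{lem:partial-isos-different} is exactly what lets us, when we need to ``kill'' a coding location, move to a tuple $\bar y$ that is $\leq_{\beta_i}$-equivalent to the old one at each level but not isomorphic to it—so that lower-level back-and-forth equivalence is preserved while the actual automorphism orbit is changed. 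The enumeration function $E$ will produce the diagram of $\scrB$, and the restraint relations $\leq^L_\xi$ will encode that tuples which look $\xi$-true must have compatible coding data at level $\xi$. I would verify the extendability condition using $\eta$-friendliness and freeness: given a finite $0$-run with the required $\leq^L_{\xi_i+1}$-compatibilities along the associated stages $s_k < \cdots < s_0$, the data at stage $s_k$ (the last apparently-true stage) must be extended to a new state $\ell$ that is $\trianglelefteq^L$-above $\ell_{s_k}$ and $\leq^L_{\xi_i}$-above each $\ell_{s_i}$; this is done by applying Lemma \ref{lem:partial-isos-different} with the ordinals $\alpha_i$ being the freeness witnesses and $\beta_i = \xi_i$, choosing the new coding tuples appropriately, and invoking $\eta$-friendliness to witness the $\leq_{\beta_i}$ relations effectively.

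Once the metatheorem (Theorem \ref{thm:meta}) produces a computable-relative-to-$\bd$ infinite $0$-run $\pi$, the true path $t_0 \trianglelefteq t_1 \trianglelefteq \cdots$ picks out the true values of the coding tuples, and $\scrB = E(\pi)$ is $\bd$-computable. For (1): since the true path is computable from $\nabla^\eta$, which is $\Delta^0_\eta(\bd)$, one reads off the limiting values of $\abar_n,\bbar_n$ from $\Delta^0_\eta(\bd)$ and assembles the isomorphism $f\colon \scrB \to \scrA$, so $f$ is $\Delta^0_\eta(\bd)$-computable. For (2): given any isomorphism $g\colon \scrB \to \scrA$, we recover $\nabla^\eta$ (equivalently $\Delta^0_\eta(\bd)$) from $g \oplus \bd$ by the same inductive bootstrapping as in Theorem \ref{thm:easy-version}: knowing $\bbar_n,\abar_n$, we ask $\bd$-relativized back-and-forth questions—namely whether $g(\bbar_n)$ and the various candidate tuples are $\leq_\beta$-related or isomorphic over $g(\scrB\res\bbar_n)$—to determine the coded level at location $n$ and thereby compute the next coding tuples and the next bit of $\nabla^\eta$; here the point that the kept tuple is genuinely not isomorphic to the discarded candidate (Lemma \ref{lem:partial-isos-different}(3)) is what makes each such question decidable with the correct answer. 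The main obstacle I anticipate is getting the restraint relations and the extendability condition precisely right so that the nested freeness cascade survives injury at all levels simultaneously—in particular, ensuring that the monotonicity hypothesis $\beta_1 \geq \beta_2 \geq \cdots \geq \beta_n$ of Lemma \ref{lem:partial-isos-different} matches the ordering $\xi_0 < \xi_1 < \cdots < \xi_{k-1}$ coming from the associated stages (which runs the other way, so the coding locations must be indexed so that later-injured locations carry smaller freeness levels), and that the ``decide which formula of the Scott family a tuple satisfies'' apparatus interacts correctly with the changing tuples so that the recovered object really is an isomorphism and not merely a $\leq_\eta$-equivalence.
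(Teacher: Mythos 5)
Your plan follows the paper's proof essentially exactly: set up an $\eta$-system whose states carry a partial map $p$ together with a growing list of coding pairs $(\abar_i,\bbar_i)$, require that $\bbar_n$ be $\alpha$-free over $\abar_n$ for $\alpha$ tied to the heights of stages, enforce coding via the action tree, verify extendability via Lemma~\ref{lem:partial-isos-different}, and read off $\nabla^\eta$ from any isomorphism by asking isomorphism-type questions about the coding tuples. The one place where your description diverges from what actually happens is the decoding step: you describe an inductive bootstrapping in which one ``determines the coded level at location $n$ and thereby computes the next coding tuples,'' which is carried over from the proof of Theorem~\ref{thm:easy-version}. In the $\eta$-system construction this is unnecessary and slightly misleading, because the coding pairs $(\abar_j,\bbar_j)$ are part of the state $\ell_j$ and hence computable from the computable $0$-run $\pi$; they are never redefined (only the partial map $p$ changes, via the extendability step). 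The decoding is therefore a single non-iterative check at each $j$: $j$ lies on the true path if and only if $\ran(f\res_{|\abar_j|+|\bbar_j|})\cong\abar_j\bbar_j$, which the Scott-family apparatus over $\be$ decides, and from the set of true stages one immediately recovers $\nabla^\eta$. Relatedly, your idea of building the $\xi$-level coding compatibility into the restraint relations $\leq^L_\xi$ is not what the paper does; the restraints are just $\ran(p)\leq_\xi\ran(p')$, and the coding constraint lives in the action tree as condition (P2). Neither of these is a fatal gap, but clearing them up makes the extendability verification and the decoding direction noticeably cleaner and avoids any circularity worry about recovering the coding tuples.
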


\begin{proof}
Suppose $\scrA$ is not $\Delta^0_\beta$ categorical on any cone for any $\beta < \eta$. Let $\be$ be such that:
\begin{enumerate}
	\item $\scrA$ is \be-computable, and \be\ computes a Scott family for $\scrA$ in which each tuple satisfies a unique formula and also computes, for tuples in $\scrA$, which formula in the Scott family they satisfy,
	\item $\scrA$ is $\eta+1$-friendly relative to \be,
	\item given a tuple $\bar{a}$ and $\beta < \eta$, \be\ can decide whether a tuple $\bar{b}$ is $\beta$-free over $\bar{a}$. (Such a tuple is guaranteed to exist by Corollary \ref{cor:not-cat gives freeness} since $\scrA$ is not $\Delta^0_\beta$-categorical on any cone.)
\end{enumerate}
Fix $\bd \geq \be$ and $D \in \bd$. Our argument involves a $D$-computable $\eta$-system. To ease notation, we make no further mention of $D$ (e.g., whenever we write $\nabla^\beta$ we really mean $\nabla^\beta(D)$).

We will define our $\eta$-system. Let $B$ be a computable set of constant symbols not occurring in $A$.
Let $L$ be the set of sequences
\[ \la p; (\bar{a}_0,\bar{b}_0),(\bar{a}_1,\bar{b}_1),\ldots,(\bar{a}_r,\bar{b}_r) \ra \] where:
\begin{enumerate}
	\item[(L1)] $p$ is a finite partial bijection $B \to A$,
	\item[(L2)] $\bar{a}_n,\bar{b}_n \in A$ are tuples with $|\bar{a}_{n+1}| = |\bar{a}_n|+|\bar{b}_n|$,
	\item[(L3)] $|\ran(p)| = |\bar{a}_r|+|\bar{b}_r|$,
	\item[(L4)] $\dom(p)$ and $\ran(p)$ include the first $r$ elements of $B$ and $A$ respectively,
  \item[(L5)] $\bar{b}_{n}$ is $\alpha$-free over $\bar{a}_n$, where $\alpha = \max_{m \leq n} H(m)$ (see (B7)).
\end{enumerate}
Note that (L1)-(L4) are clearly computable, and that (L5) is $\be$-computable by property (3) of $\be$.

If $\ell$ has first coordinate $p$, and $\ell'$ has first coordinate $p'$, then for $\xi < \eta$, we set $\ell \leq^L_\xi \ell'$ if and only if $p \leq_\xi p'$, that is, if and only if $\ran(p) \leq_\xi \ran(p')$ as substructures of $\scrA$ under the usual back-and-forth relations.

Then $(\leq_\xi^L)_{\xi < \eta}$ is nested since the usual back-and-forth relations are, and $(\leq_\xi^L)_{\xi < \eta}$ and $\trianglelefteq^L$ are $\be$-computable by property (2) of $\be$.

Let $P$ consist of the sequences $\ell_0,\ldots,\ell_r$ such that
\begin{enumerate}
	\item[(P1)] if \[ \ell_n = \la p;(\bar{a}_0,\bar{b}_0),(\bar{a}_1,\bar{b}_1),\ldots,(\bar{a}_n,\bar{b}_n) \ra \] then
	\[ \ell_{n + 1} = \la p^*;(\bar{a}_0,\bar{b}_0),(\bar{a}_1,\bar{b}_1),\ldots,(\bar{a}_n,\bar{b}_n),(\bar{a}_{n+1},\bar{b}_{n+1}) \ra \]
	with $\dom(p) \subseteq \dom(p^*)$,
	\item[(P2)] for each $n$, if \[ \ell_n = \la p;(\bar{a}_0,\bar{b}_0),(\bar{a}_1,\bar{b}_1),\ldots,(\bar{a}_n,\bar{b}_n) \ra \] then for each $i$, $\ran(p \res_{|\bar{a}_i| + |\bar{b}_i|}) \cong \bar{a}_i\bar{b}_i$ if and only if $i \trianglelefteq n$,
	\item[(P3)] if $m \trianglelefteq n$, $\ell_m$ has first coordinate $p_m$, and $\ell_n$ has first coordinate $p_n$, then $p_m \subseteq p_n$.
\end{enumerate}

Note that (P1) and (P3) are computable, and that (P2) is $\be$-computable by property (1) of $\be$.

Given \[ \ell_n = \la p;(\bar{a}_0,\bar{b}_0),(\bar{a}_1,\bar{b}_1),\ldots,(\bar{a}_n,\bar{b}_n) \ra, \]let $E(\ell)$ be the partial atomic diagram on $\scrB$ obtained by the pullback along $p$ (using only the first $|p|$ logical symbols).

Note that $E(\ell)$ is computable, and if $\ell_0 \leq_0^L \ell_1$ with first coordinates $p_0$ and $p_1$, respectively, then $p_0 \leq_0 p_1$, so that $E(\ell_0) \subseteq E(\ell_1)$.

Thus we have an $\eta$-system $(L,P,(\leq_\xi^L)_{\xi < \eta},E)$.

\begin{lemma}
The $\eta$-system $(L,P,(\leq_\xi^L)_{\xi < \eta},E)$ has the extendability condition.
\end{lemma}
\begin{proof}
Suppose we have a finite $0$-run $\pi = \la \ell_0, ..., \ell_{s-1} \ra$, and let $s_k < s_{k-1} < ... < s_0= s-1$, and $\xi_0 < \xi_1 < ... < \xi_{k-1} < \eta$ be the associated sequences of stages and ordinals to $s$, as in Definition \ref{defn: associated stages and ordinals}. Suppose that for each $i$, the first coordinate of $\ell_{s_i}$ is $q_{s_i}$.

\begin{claim}
There exists $p \supset q_{s_k}$ such that $q_{s_i} \leq_{\xi_i} p$ for $0\leq i \leq k$.
\end{claim}
\begin{proof}
We construct $p$ inductively as follows. We let $q_{s_0}^* = q_{s_0}$, and for $0 \leq i < k$, let $q_{s_{i+1}}^* \supseteq q_{s_{i+1}}$ be such that $q_{s_{i}}^* \leq_{\xi_i} q_{s_{i+1}}^*$. This is possible since $q_{s_{i+1}} \leq_{\xi_i +1} q_{s_i}$ and since $q_{s_i}^* \supseteq q_{s_i}$. Let $p = q_{s_k}^*$. Then certainly $q_{s_k}^* \leq_{\xi_k}^L p$. As $q_{s_{i}}^* \leq_{\xi_i} q_{s_{i+1}}^*$ and $\xi_i < \xi_{i+1}$, it follows inductively that each $q_{s_i}^* \leq_{\xi_i} p$. Since $q_{s_{i}}^* \supseteq q_{s_{i}}$, we have $q_{s_i} \leq_{\xi_i} p$ as desired.
\end{proof}

Let
\[ \ell_{s_0} = \ell_{s-1} =\la q_{s-1}; (\bar{a}_0,\bar{b}_0),(\bar{a}_1,\bar{b}_1),\ldots,(\bar{a}_{s-1},\bar{b}_{s-1}) \ra.\]

\begin{claim}
There exists $p^* \supset q_{s_k}$ such that $q_{s_i} \leq_{\xi_i} p^*$ for $0 \leq i < k$ and such that $\ran(p^* \res_{|\bar{a}_n|+|\bar{b}_n|}) \ncong \bar{a}_n \bar{b}_n$ for $s_k < n \leq s_0 = s-1$.
\end{claim}
\begin{proof}
Let $p \supset q_{s_k}$ be as in the previous claim. We will use Lemma \ref{lem:partial-isos-different}. Let $\bar{x} = \ran(p)$ and $n = s_0 - s_k$. For $i = 1,\ldots,n$, let $\bar{u}_i = \bar{a}_{s_k + i}$ and $\bar{v}_i = \bar{b}_{s_k + i}$. For $i = 1,\ldots,n$, let $\alpha_i = \max_{1 \leq j \leq s_k + i} H(j)$ and let $\beta_i = \xi_{j}$ where $j$ is such that $s_{j+1} < i \leq s_{j}$. Note that by (L5), $\bar{v}_i$ is $\alpha_i$-free over $\bar{u}_i$ and that $\beta_1 \geq \beta_2 \geq \cdots$. Also, if $s_{j+1} < i \leq s_{j}$, then since $s_{j+1} = t_{\xi_j +1}$, $i \not\leq_{\xi_j +1} s$. So $\alpha_i \geq H(i) \geq \xi_j +1 > \xi_j =\beta_i$. Let $\bar{y}$ be the tuple we get by applying Lemma \ref{lem:partial-isos-different} and let $p^*$ map the domain of $p$ to $\bar{y}$. Then
\[ p^* \res_{|\bar{a}_{s_k}| + |\bar{b}_{s_k}|} = p \res_{|\bar{a}_{s_k}| + |\bar{b}_{s_k}|} \supset q_{s_k} \]
and so $p^* \supseteq q_{s_k}$. Also,
\[ q_{s_i} \leq_{\xi_i} p \res_{|\bar{a}_{s_i}|+|\bar{b}_{s_i}|} \leq_{\xi_i} p^* \res_{|\bar{a}_{s_i}| + |\bar{b}_{s_i}|} \]
and so $q_{s_i} \leq_{\xi_i} p^*$. Finally, for $i = s_k+1,\ldots,s_0$, $p^* \res_{|\bar{a}_i| + |\bar{b}_i|} \ncong \bar{a}_i \bar{b}_i$.
\end{proof}

Let $\bar{a}_s = \ran(p^*)$, and let $\bar{b}_s$ be $\alpha$-free over $\bar{a}_s$ where $\alpha = \max_{t \leq s} H(t)$, and such that $\bar{a}_s \bar{b}_s$ contains the first $s$-many elements of $\scrA$. Let $\bar{c}$ be a new set of constants in $B$ and let $p^{**} = p^* \cup \{\bar{c} \mapsto \bar{b}_s\}$. Let
\[ \ell_{s} = \la p^{**};(\bar{a}_0,\bar{b}_0),(\bar{a}_1,\bar{b}_1),\ldots,(\bar{a}_{s-1},\bar{b}_{s-1}),(\bar{a}_s,\bar{b}_s) \ra.\]
We claim that $\ell_0,\ldots,\ell_s$ is in $P$. That (L1), (L2), and (L3) hold is clear. (L4) and (L5) follow from the choice of $\bar{b}_s$. (P1) is also clear. (P3) follows from the fact that $p^{**} \supseteq q_{s_k}$ and $s_k$ was maximal with $s_k \trianglelefteq s$.

For (P2), if $i \leq s_k$, then since $p^{**} \supseteq q_{s_k}$ and (P2) held at stage $s_k$,
$\ran(p^{**} \res_{|\bar{a}_i| + |\bar{b}_i|}) \cong \bar{a}_i\bar{b}_i$ if and only if $i \trianglelefteq s_k$, and since $s_k \trianglelefteq s$, $i \trianglelefteq s_k$ if and only if $i \trianglelefteq s$ by (B8) and (B9). If $s_k < i < s$, then since $s_k$ is maximal with $s_k \trianglelefteq s$, $i \ntrianglelefteq s$ and by choice of $p^{*}$ in the second claim above, $\ran(p^{**} \res_{|\bar{a}_i| + |\bar{b}_i|}) \ncong \bar{a}_i\bar{b}_i$. The case $i = s$ is clear. Hence $\pi \concat \ell_s \in P$.

Since $p^{**} \supseteq q_{s_k}$, $q_{s_k} \leq_\xi p^{**}$ for all $\xi < \eta$. Given $i < k$, $q_{s_i} \leq_{\xi_i} p^* \subseteq p^{**}$. This completes the proof of the extendability condition.
\end{proof}

By the metatheorem, there is a computable 0-run $\pi = \ell_0 \ell_1 \cdots$ for $(L,P,(\leq_i^L)_{i \leq \eta},E)$. $E(\pi)$ is the diagram of a structure on $\scrB$. For each $j$, let \[ \ell_j = \la p_j; (\bar{a}_0,\bar{b}_0),(\bar{a}_1,\bar{b}_1),\ldots,(\bar{a}_j,\bar{b}_j) \ra.\] Then, along the true stages, by (P3) the $p_i$ are nested, and by (L4) they form a bijection $B \to A$. By definition of $E$, they are an isomorphism $\scrB \to \scrA$.

\begin{lemma}
Let $f: \mc{B} \to \mc{A}$ be an isomorphism. Then $f \geq_T \Delta^0_\eta$.
\end{lemma}
\begin{proof}
Using $f$ we will compute the true path $i_1 \trianglelefteq i_2 \trianglelefteq \ldots$. Then we can compute $\nabla^\eta = \bigcup_{n \in \omega} \nabla^\eta_{i_n}$. We claim that $\ell_j$ is a true stage if and only if
\begin{enumerate}
	\item[($*$)] $\ran(f \res_{|\bar{a}_j| + |\bar{b}_j|}) \cong \bar{a}_j \bar{b}_j$.
\end{enumerate}
Note that ($*$) is computable in $f$, and so this will complete the proof.

If $j$ is a true stage, then $p_j$ extends to an isomorphism $\scrB \to \scrA$. Since $f$ is also an isomorphism, there is an automorphism of $\scrA$ taking $\ran(f \res_{\dom(p_j)})$, as an ordered tuple, to $\ran(p_j)$. By (P2), we have $\ran(p_j \res_{|\bar{a}_j| + |\bar{b}_j|}) \cong \bar{a}_j \bar{b}_j$ and so we have ($*$).

If $j$ satisfies ($*$), then we claim that $j$ is a true stage. Suppose not, and let $p = \bigcup_{n \in \omega} p_{i_n}$ be the isomorphism $\scrB \to \scrA$ along the true path. Let $i_n$ be such that $j < i_n$. Then by (B10), $j \ntrianglelefteq i_n$, and so $\ran(p_{i_n} \res_{|\bar{a}_j| + |\bar{b}_j|}) \ncong \bar{a}_j \bar{b}_j$. Since $p_{i_n} \subseteq p$ and $f$ is also an isomorphism $\scrB \to \scrA$, we have
\[ \ran(f \res_{|\bar{a}_j| + |\bar{b}_j|}) \cong \ran(p_{i_n} \res_{|\bar{a}_j| + |\bar{b}_j|}) \ncong \bar{a}_j \bar{b}_j. \]
This contradicts ($*$). So $j$ is a true stage.
\end{proof}

\begin{lemma}
There is an isomorphism $f: \mc{B} \to \mc{A}$ with $\Delta^0_\eta \geq_T f$.
\end{lemma}
\begin{proof}
Using $\Delta^0_\eta$ we can compute the true path $i_1 \trianglelefteq i_2 \trianglelefteq \cdots$. Then along this path we compute an isomorphism $f = \bigcup_n p_{i_n}$ from $\mc{B} \to \mc{A}$.
\end{proof}

This completes the proof.
\end{proof}

We can improve the statement of the theorem slightly as follows using Knight's theorem on the upwards closure of degree spectra.

\begin{cor}\label{cor:bettermain}
Let $\scrA$ be a computable structure. If $\eta$ is an ordinal and $\scrA$ is not $\Delta^0_\beta$ categorical on any cone for any $\beta < \eta$, then there exists an $\textup{\be}$ such that for all $\textup{\bd} \geq \textup{\be}$, there exists a \textup{\bd}-computable copy $\scrB$ of $\scrA$ such that $\Delta^0_\eta(\textup{\bd})$ computes an isomorphism between $\scrA$ and $\scrB$, and every such isomorphism computes $\Delta^0_\eta(\textup{\bd})$.
\end{cor}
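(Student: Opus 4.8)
The plan is to derive Corollary \ref{cor:bettermain} from the preceding theorem by the same bootstrapping trick used to pass from Theorem \ref{thm:easy-version} to its corollary, now invoking Knight's upward closure theorem \cite{Knight86} in place of its trivial instance. Fix $\scrA$ computable and not $\Delta^0_\beta$ categorical on any cone for $\beta < \eta$. Let $\be$ be the cone base furnished by the theorem, fix $\bd \geq \be$, and let $\scrC$ be the $\bd$-computable copy of $\scrA$ produced by the theorem, so that $\Delta^0_\eta(\bd)$ computes an isomorphism $\scrC \to \scrA$ while every isomorphism $f \colon \scrC \to \scrA$ satisfies $f \oplus \bd \geq_T \Delta^0_\eta(\bd)$.

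Next I would apply Knight's upward closure theorem to $\scrC$. Since $\scrC$ is not automorphically trivial (it is a copy of $\scrA$, which, being not even $\Delta^0_1$ categorical on a cone, is certainly not trivial, so its degree spectrum is upward closed among the degrees computing $\scrC$), there is a copy $\scrB$ with $\deg(\scrB) = \bd$ together with a $\bd$-computable isomorphism $h \colon \scrB \to \scrC$. Note $\scrB$ is $\bd$-computable, as required.

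Now I verify the two clauses. For the upper bound: $\Delta^0_\eta(\bd) \geq_T \bd \geq_T h$, and $\Delta^0_\eta(\bd)$ computes some isomorphism $g_0 \colon \scrC \to \scrA$; hence $\Delta^0_\eta(\bd)$ computes $g_0 \circ h \colon \scrB \to \scrA$. For the lower bound: let $g \colon \scrA \to \scrB$ be any isomorphism. Since $\scrA$ is computable and $\deg(\scrB) = \bd$, we have $g \geq_T \bd$ (an isomorphism onto $\scrB$ computes the atomic diagram of $\scrB$), so $g \geq_T h$ as well; therefore $g$ computes $h^{-1} \circ g^{-1} \colon \scrC \to \scrA$, an isomorphism from $\scrC$ to $\scrA$, and so $g \oplus \bd \equiv_T g \geq_T \Delta^0_\eta(\bd)$ by the theorem. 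Thus every isomorphism between $\scrA$ and $\scrB$ computes $\Delta^0_\eta(\bd)$, and $\Delta^0_\eta(\bd)$ computes some such isomorphism.

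There is essentially no obstacle here: the only point requiring care is the hypothesis of Knight's theorem, namely that $\scrC$ is not of the degenerate ``trivial'' type excluded in \cite{Knight86} — and this is immediate since a trivial structure would be computably categorical on a cone, contradicting the standing assumption on $\scrA$ (with $\beta = 0 < \eta$, noting $\eta \geq 1$). A small bookkeeping point is to make sure that when we compose with $h$ we stay within the Turing degree of $\bd$ rather than merely below $\Delta^0_\eta(\bd)$; this is automatic because $h$ is $\bd$-computable. This completes the proof.
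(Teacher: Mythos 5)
Your proof is correct and follows essentially the same argument as the paper: apply Knight's upward closure theorem to the copy from the preceding theorem to get a copy of degree exactly $\bd$, observe that any isomorphism onto it computes $\bd$ and hence the $\bd$-computable conversion map $h$, and compose to transfer the two bounds. The only difference is cosmetic (your $\scrB$/$\scrC$ labelling is swapped relative to the paper's), and you correctly attribute the forward isomorphism to $\Delta^0_\eta(\bd)$ rather than $\bd$.
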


\begin{proof} Take $\be$ as guaranteed by the theorem, and fix $\bd \geq \be$. Let $\scrB$ be as guaranteed by Theorem \ref{thm:main1}. Since $\scrB$ is \bd-computable, by the proof of Knight's upward closure theorem \cite{Knight86}, there exists $\scrC$ such that deg$(\scrC)=\bd$ and such that there exists a $\bd$-computable isomorphism $h: \scrC \iso \scrB$. Now since $\scrA$ is computable and deg$(\scrC)=\bd$, any isomorphism $g: \scrA \iso \scrC$ computes $\bd$. Since $\bd$ computes $h$, $g$ computes the isomorphism $g \circ h :\scrB \iso \scrA$ and hence $\Delta^0_\eta(\bd)$. Moreover, $\bd$ computes an isomorphism between $\scrA$ and $\scrB$, and hence between $\scrA$ and $\scrC$.
\end{proof}

It is now simple to extract Theorem \ref{thm:main1} from the above result.

\begin{proof}[Proof of Theorem \ref{thm:main1}] Let $\scrA$ be a computable structure. By Remark \ref{rem:allstructuresalphacatoncone}, there is an ordinal $\alpha$ such that $\scrA$ is $\Delta^0_\alpha$ categorical on a cone. Let $\alpha \geq 1$ be the least such. By Corollary \ref{cor:bettermain}, there is a cone such that for every $\bf{d}$ in the cone, there exists a $\bf{d}$-computable copy $\scrB$ of $\scrA$ such that every isomorphism between $\scrA$ and $\scrB$ computes $\Delta^0_\alpha(\bd)$. Thus $\scrA$ has strong degree of categoricity $\bf{0^{(\alpha)}}$ on this cone ($\bf{0^{(\alpha-1)}}$ if $\alpha$ is finite).
\end{proof}

%
%

\bibliography{References}
\bibliographystyle{alpha}

\end{document}